\documentclass[12pt,reqno]{amsart}
\textwidth=14.5cm  \oddsidemargin=0.5cm
\usepackage{graphicx}
\usepackage{psfrag}
\usepackage{pxfonts}
\usepackage{mathrsfs}
\usepackage{color}
\usepackage{amsmath,amssymb}


\numberwithin{equation}{section}

\newcommand{\x} {\tilde{x}}

\theoremstyle{plain}
\newtheorem{maintheorem}{Theorem}



\newtheorem{theorem}{Theorem}[section]

\newtheorem{proposition}[theorem]{Proposition}
\newtheorem{lemma}[theorem]{Lemma}
\newtheorem{definition}[theorem]{Definition}

\newtheorem{question}{Question}
\theoremstyle{remark}
\newtheorem{remark}[theorem]{Remark}



\begin{document}

\thanks{First version: Arxiv May/2020. https://arxiv.org/pdf/2006.00407.pdf}

\author[F. Micena]{Fernando Micena}
\address{Instituto de Matem\'{a}tica e Computa\c{c}\~{a}o,
  IMC-UNIFEI, Itajub\'{a}-MG, Brazil.}
\email{fpmicena82@unifei.edu.br}


\renewcommand{\subjclassname}{\textup{2000} Mathematics Subject Classification}

\date{\today}

\setcounter{tocdepth}{2}

\title{Rigidity for Some Cases of Anosov Endomorphisms of Torus}
\maketitle
\begin{abstract}
 We obtain smooth conjugacy between non-necessarily special Anosov endomorphisms in the conservative case.  Among other results, we prove that a strongly special $C^{\infty}-$Anosov endomorphism of $\mathbb{T}^2$ and its linearization are smoothly conjugated since they have the same periodic data. Assuming that for a strongly special $C^{\infty}-$Anosov endomorphism of $\mathbb{T}^2$ every point is regular (in Oseledec's Theorem sense), then we obtain again smooth conjugacy with its linearization.
 We also obtain some results on local rigidity of linear Anosov endomorphisms of $d-$torus, where $d \geq 3,$ under periodic data assumption. The study of differential equations defined on invariant leaves plays an important role in rigidity problems such as those treated here.

\end{abstract}

\section{Introduction}\label{section.preliminaries}

In the $1970s,$ the works \cite{PRZ} and \cite{MP}  generalized the notion of Anosov diffeomorphism for non-invertible maps, introducing the notion of Anosov endomorphism. We consider $M$ a $C^{\infty}-$closed manifold.

\begin{definition}\cite{PRZ} \label{defprz} Let $f: M \rightarrow M$ be a  $C^1$ local diffeomorphism. We say that $f$ is an Anosov endomorphism if there are constants $C> 0$ and $\lambda > 1,$ such that, for every $(x_n)_{n \in \mathbb{Z}}$ an $f-$orbit there is a splitting

$$T_{x_i} M = E^s_{x_i} \oplus E^u_{x_i}, \forall i \in \mathbb{Z},$$

which is preserved by $Df$ and for all $n > 0 $ we have

$$||Df^n(x_i) \cdot v|| \geq C^{-1} \lambda^n ||v||, \;\mbox{for every}\; v \in E^u_{x_i} \;\mbox{and for any} \; i \in \mathbb{Z},$$
$$||Df^n(x_i) \cdot v|| \leq C\lambda^{-n} ||v||, \;\mbox{for every}\; v \in E^s_{x_i} \;\mbox{and for any} \; i \in \mathbb{Z}.$$

\end{definition}

We denote by $M^f$ the space of all $f-$orbits $\x= (x_n)_{n \in \mathbb{Z}},$ endowed with me metric $$\bar{d}(\tilde{x}, \tilde{y}) =  \sum_{i \in \mathbb{Z}} \frac{d(x_i, y_i)}{2^{|i|}},$$ where $d$ denotes the Riemannian metric on $M$ and $\x= (x_n)_{n \in \mathbb{Z}}, \tilde{y}= (y_n)_{n \in \mathbb{Z}},$ two $f-$orbits. We denote by $p: M^f \rightarrow M,$ the natural projection $$p((x_n)_{n \in \mathbb{Z}}) = x_0.$$

The space $(M^f, \bar{d})$ is compact, moreover $f$ induces a continuous map $\tilde{f}: M^f \rightarrow M^f,$ given by the shift
$$\tilde{f}((x_n)_{n \in \mathbb{Z}}) = (x_{n+1})_{n \in \mathbb{Z}}. $$

Anosov endomorphisms can be defined in an equivalent way (\cite{MP}).

\begin{definition}\cite{MP} \label{defmp} A $C^1$ local diffeomorphism $f: M \rightarrow M$ is said an Anosov endomorphism if $Df$ contracts uniformly a $Df-$invariant and continuous sub-bundle $E^s \subset TM$ into itself and the action of $Df$ on the quotient $TM/E^s$ is uniformly expanding.
\end{definition}

\begin{proposition}[\cite{MP}] A local diffeomorphism $f: M \rightarrow M$ is an Anosov endomorphism of $M$ if and only if the lift $\overline{f}: \overline{M} \rightarrow \overline{M}$ is an Anosov diffeomorphism of $\overline{M},$ the universal cover of $M.$
\end{proposition}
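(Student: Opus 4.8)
The plan is to prove the two implications separately, playing the two equivalent definitions above off each other: the ``only if'' direction is cleanest via Definition~\ref{defprz} (orbit splittings), the ``if'' direction via Definition~\ref{defmp} (stable subbundle). Fix once and for all a Riemannian metric on $M$ and lift it to $\overline{M}$, so that the covering projection $\pi:\overline{M}\to M$ is a local isometry and the deck group $\Gamma=\pi_1(M)$ acts on $\overline{M}$ by isometries. For the ``only if'' part, assume $f$ is an Anosov endomorphism. The first and decisive step — where the possible non-invertibility of $f$ must be reconciled with the invertibility of $\overline{f}$ — is to show $\overline{f}$ is in fact a diffeomorphism: since $f$ is a local diffeomorphism of the compact connected manifold $M$, the image $f(M)$ is open and compact, so $f$ is onto, and a proper surjective local homeomorphism is a finite-degree covering map; hence $f\circ\pi$ is a composition of covering maps (legitimate because $f$ has finite degree), so a covering map $\overline{M}\to M$, which, $\overline{M}$ being simply connected, is a universal covering of $M$. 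Comparing it with $\pi$ through the uniqueness of universal coverings, and using that two lifts of a map through $\pi$ differ by a deck transformation, one gets $\overline{f}=\gamma\circ h$ for some $\gamma\in\Gamma$ and some homeomorphism $h$ of $\overline{M}$; thus $\overline{f}$ is a homeomorphism, and being a smooth local diffeomorphism it is a diffeomorphism. Finally, for each $\overline{x}\in\overline{M}$ there is a \emph{unique} $\overline{f}$-orbit through it, namely $(\overline{f}^n(\overline{x}))_{n\in\mathbb{Z}}$, whose projection is an $f$-orbit; pulling the splitting provided by Definition~\ref{defprz} back through the isomorphisms $D\pi$ gives a $D\overline{f}$-invariant splitting $T_{\overline{x}}\overline{M}=\overline{E}^s_{\overline{x}}\oplus\overline{E}^u_{\overline{x}}$, with the same constants $C,\lambda$ (since $\pi$ is a local isometry) and uniformly in $\overline{x}$ (since every $\overline{f}$-orbit lies over an $f$-orbit in the compact manifold $M$); continuity of $\overline{x}\mapsto\overline{E}^{s/u}_{\overline{x}}$ follows from the uniform hyperbolic estimates in the usual way, so $\overline{f}$ is an Anosov diffeomorphism.

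Conversely, assume $\overline{f}$ is an Anosov diffeomorphism, with hyperbolic splitting $T\overline{M}=\overline{E}^s\oplus\overline{E}^u$. The plan is to show $\overline{E}^s$ is $\Gamma$-invariant and descend it to $M$. Define $\rho:\Gamma\to\Gamma$ by $\overline{f}\circ\gamma=\rho(\gamma)\circ\overline{f}$: this is well posed because both sides project to the same map to $M$, it is a homomorphism, and it is injective because $\overline{f}$ is now a bijection; inductively $\overline{f}^{\,n}\circ\gamma=\rho^n(\gamma)\circ\overline{f}^{\,n}$. Using the characterization $\overline{E}^s_{\overline{x}}=\{v\in T_{\overline{x}}\overline{M}:\ \|D\overline{f}^{\,n}(\overline{x})v\|\to 0\ \text{as}\ n\to+\infty\}$ together with the fact that every $\rho^n(\gamma)$ is an isometry of the lifted metric, one gets that $D\gamma$ maps $\overline{E}^s_{\overline{x}}$ into $\overline{E}^s_{\gamma\overline{x}}$, hence onto it by a dimension count; so $\overline{E}^s$ is $\Gamma$-invariant and descends to a continuous $Df$-invariant subbundle $E^s\subset TM$. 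Because $\pi$ is a local isometry, the uniform contraction of $\overline{E}^s$ descends to a uniform contraction of $E^s$ by $Df$, and the uniform expansion of $D\overline{f}$ on $\overline{E}^u\cong T\overline{M}/\overline{E}^s$ descends to uniform expansion of $Df$ on $TM/E^s$. By Definition~\ref{defmp}, $f$ is an Anosov endomorphism.

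I expect the main obstacle to be the first direction: extracting, and genuinely justifying, the diffeomorphism $\overline{f}$ from a possibly non-invertible $f$ — i.e. the covering-space bookkeeping and the identification of $f\circ\pi$ with a universal covering — and then checking that the splitting pulled back to $\overline{M}$ is continuous and retains uniform constants. Once $\overline{f}$ is known to be a diffeomorphism, everything else is routine transfer of estimates along the local isometry $\pi$ together with the $\Gamma$-equivariance of $\overline{E}^s$ via the induced endomorphism $\rho$.
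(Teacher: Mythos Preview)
The paper does not prove this proposition; it is quoted from \cite{MP} as a known fact, with no argument supplied. So there is nothing in the paper to compare your attempt against. Your argument is correct and follows the standard route: the covering-space reasoning that any lift $\overline f$ must be a bijection (hence a diffeomorphism) is exactly where care is required, and you handle it properly; the transfer of hyperbolic estimates through the local isometry $\pi$ in both directions is then routine, and your $\Gamma$-equivariance argument for $\overline E^s$ via the induced endomorphism $\rho$ of the deck group is the clean way to descend. One small remark: by using Definition~\ref{defprz} for the forward implication and Definition~\ref{defmp} for the converse, you have in effect shown ``Definition~\ref{defprz} for $f$'' $\Rightarrow$ ``$\overline f$ Anosov'' $\Rightarrow$ ``Definition~\ref{defmp} for $f$'', which incidentally yields one half of Sakai's equivalence as a byproduct; closing the loop directly (say, by lifting the $E^s$ of Definition~\ref{defmp} to $\overline M$ and then building the complementary $\overline E^u$ via a cone argument on the now invertible $\overline f$) is easy and would make the proof independent of \cite{SA}.
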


Sakai, in \cite{SA} proved that, in fact, the definitions $\ref{defprz}$ and $\ref{defmp}$ are equivalent. The definition \ref{defmp} will be particularly important for the proof of Theorem \ref{teo3}.

An advantage to work with the definition given in \cite{MP} is that in $\overline{M}$ we can construct invariant foliations $\mathcal{F}^s_{\overline{f}}$ and $\mathcal{F}^u_{\overline{f}}.$ In Theorem \ref{teo3}, we will borrow the transverse structure of  $\mathcal{F}^s_{\overline{f}}$ and $\mathcal{F}^u_{\overline{f}}.$

Let $f: M  \rightarrow M$ be a $C^r-$Anosov endomorphism with $r \geq 1,$ it is know that $E^s$ and $E^u$ are integrable to $C^r-$leaves $W^s_f(\tilde{x})$ and $W^u_f(\tilde{x}),$ which are $C^r-$submanifols, such that
\begin{enumerate}
\item $W^s_f(x) = \{y \in M \;|  \displaystyle\lim_{n \rightarrow +\infty} d(f^n(x), f^n(y)) = 0\},$
\item $W^u_f(\tilde{x}) = \{y \in M \;| \exists \tilde{y} \in M^f \; \mbox{such that}\; y_0 = y \; \mbox{and} \;  \displaystyle\lim_{n \rightarrow +\infty} d(x_{-n}, y_{-n}) = 0\}.$
\end{enumerate}

The leaves $W^s_f(\tilde{x})$ and $W^u_f(\tilde{x})$  vary  $C^1-$continuously with $\tilde{x},$ see Theorem 2.5 of \cite{PRZ}.

Given an Anosov endomorphism let $E^u(\tilde{x})$ denotes the bundle $E^u_{x_0}.$ An Anosov endomorphism for which $E^u(\x)$ just depends on $x_0$ (unique unstable direction for each point) is called special Anosov endomorphism. A linear Anosov endomorphism of the torus is an example of a special Anosov endomorphism. Of course, when $f$ is an special Anosov endomorphism we have $W^u_f(\x) =  W^u_f(\tilde{y}),$ for any $\tilde{x}, \tilde{y}$ such that $x_0 = y_0.$ So makes sense denote in this case $W^u_f(\x) = W^u_f(x_0).$

A contrast between Anosov diffeomorphisms and Anosov endomorphisms is the non-structural stability of the latter. Indeed, $C^1-$close to any linear Anosov endomorphism $A$ of the torus, Przytycki \cite{PRZ} constructed Anosov endomorphism which has infinitely many unstable directions for some positive orbit, and consequently, he showed that $A$ is not structurally stable. However, it is curious to observe that the topological entropy is locally constant among Anosov endomorphisms. Indeed, take the lift of Anosov endomorphism to the inverse limit space (see preliminaries for the definition). At the level of inverse limit space, two nearby Anosov endomorphisms are conjugate (\cite{PRZ}, \cite{BerRov}), and lifting to inverse limit space does not change the entropy.

\begin{definition} A continuous surjection $f:\mathbb{T}^n \rightarrow \mathbb{T}^n $ is said strongly special Anosov endomorphism map if $f$ is a special Anosov endomorphism which is not injective and for each point $x \in \mathbb{T}^n,$ the stable leaf $W^s_f(x)$ is dense in $\mathbb{T}^n.$
\end{definition}

The celebrated theory due Franks, Manning and Newhouse asserts that given $f: \mathbb{T}^n \rightarrow \mathbb{T}^n $ an Anosov diffeomorphism with codimension one, then $f$ is conjugated with its linearization $A.$ It means that there is a homeomorphism $h: \mathbb{T}^n \rightarrow \mathbb{T}^n$ such that $$h\circ A =  f\circ h.$$
We understand the linearization $A$ of $f$ being the action on $\mathbb{T}^n,$ of the matrix with integer entries $A,$  where $A$ is  given by the action of $f$ in $\Pi_1(\mathbb{T}^n) = \mathbb{Z}^n.$

In 1990 years, R. de la Llave in several works characterized the smooth Anosov diffeomorphisms $f: \mathbb{T}^2 \rightarrow \mathbb{T}^2$ which are differentiable conjugated with its linearization $A.$ If fact, the condition is known by the same periodic data between corresponding points, it means that if $p$ and $q$ are periodic points for $A$ and $f$ respectively, with period $n$ and the conjugacy $h$ is such that $h(p) = q, $ then the Lyapunov exponents of $p$ and $q$ coincides, i.e,
$$\displaystyle\lim_{n \rightarrow +\infty}\frac{1}{n} \log(Df^n(q)|E^{\ast}_f(q)) = \displaystyle\lim_{n \rightarrow +\infty} \frac{1}{n} \log(DA^n(q)|E^{\ast}_A(p)), \ast \in {s,u}.$$
Recent advances are made for Anosov diffeomorphisms of $\mathbb{T}^3,$ see \cite{GoGu}. For $n \geq 4,$ there are counterexamples, see \cite{Llave92}.

Of course, a non-special Anosov endomorphism can not be conjugated with its linearization. For strongly special Anosov endomorphism we have the Theorem due to \cite{AH}, that we present shortly.

\begin{theorem}
Every strongly special Anosov endomorphism $f:\mathbb{T}^n \rightarrow \mathbb{T}^n$ is conjugated with its linearization.
\end{theorem}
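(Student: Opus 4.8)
The plan is to run the Franks--Manning scheme on the universal cover $\mathbb{R}^n$ and then descend to $\mathbb{T}^n$, the entire difficulty concentrating in the descent, where the minimality of the stable foliation (the strongly special hypothesis) is the decisive input. First I would lift $f$ to $\overline{f}\colon\mathbb{R}^n\to\mathbb{R}^n$. Since $f$ and its linearization $A$ induce the same automorphism of $\pi_1(\mathbb{T}^n)=\mathbb{Z}^n$, the lift satisfies $\overline{f}(x+v)=\overline{f}(x)+Av$ for every $v\in\mathbb{Z}^n$, so $\psi:=\overline{f}-A$ is $\mathbb{Z}^n$-periodic and hence bounded on $\mathbb{R}^n$; as in the invertible case uniform hyperbolicity forces $A$ to be a hyperbolic integer matrix, with linear splitting $\mathbb{R}^n=E^s_A\oplus E^u_A$. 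By the proposition recalled above, $\overline{f}$ is a diffeomorphism of $\mathbb{R}^n$ carrying a uniform hyperbolic splitting $E^s\oplus E^u$; the stable foliation $\mathcal{F}^s_{\overline{f}}$ (its leaves are the sets of points with forward-asymptotic $\overline{f}$-orbits) is $\mathbb{Z}^n$-invariant, and because $f$ is special so are $E^u$ and $\mathcal{F}^u_{\overline{f}}$, hence both descend to transverse foliations $\mathcal{F}^s_f,\mathcal{F}^u_f$ of $\mathbb{T}^n$, the first with every leaf dense by hypothesis.

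Next I would solve on $\mathbb{R}^n$ the equation $A\circ H=H\circ\overline{f}$ with $H-\mathrm{id}$ bounded. Writing $H=\mathrm{id}+h$ turns this into the twisted cohomological equation $Ah-h\circ\overline{f}=\psi$; decomposing $h=h^s+h^u$, $\psi=\psi^s+\psi^u$ along $E^s_A\oplus E^u_A$, the unstable part is obtained by forward iteration, $h^u=\sum_{j\ge 0}(A|_{E^u_A})^{-(j+1)}\,\psi^u\!\circ\overline{f}^{\,j}$, and the stable part by backward iteration, $h^s=-\sum_{j\ge 1}(A|_{E^s_A})^{\,j-1}\,\psi^s\!\circ\overline{f}^{-j}$; both series converge uniformly because $A$ is hyperbolic and $\psi$ is bounded. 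Thus $H$ exists, is continuous, and $H-\mathrm{id}$ is bounded; the standard argument then shows $H$ is a homeomorphism of $\mathbb{R}^n$: it is proper and of degree one (hence onto), and if $H(x)=H(y)$ then $H(\overline{f}^{\,k}x)=H(\overline{f}^{\,k}y)$ for all $k\in\mathbb{Z}$, so the two $\overline{f}$-orbits stay within a bounded distance of one another, which by the uniform hyperbolicity of $\overline{f}$ forces $x=y$.

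The crux, and the step I expect to be the main obstacle, is to prove that $H$ commutes with the deck group, i.e. $H(x+v)=H(x)+v$ for $v\in\mathbb{Z}^n$, so that it descends. Put $\delta_v(x):=H(x+v)-H(x)-v$ (so $\delta_v=h(\cdot+v)-h(\cdot)$ is bounded by $2\|h\|_\infty$, uniformly in $v$); from $A\circ H=H\circ\overline{f}$ and $\overline{f}(x+v)=\overline{f}(x)+Av$ one gets $A^{k}\delta_v(x)=\delta_{A^{k}v}(\overline{f}^{\,k}x)$ for $k\ge 0$, and since the left-hand side cannot grow while the right-hand side is uniformly bounded, $\delta_v(x)\in E^s_A$ for every $x$. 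One cannot run the same estimate backwards (that would require $A^{-k}v\in\mathbb{Z}^n$, which fails since $f$ is non-injective), so this $E^s_A$-valued defect must be eliminated by hand: here the strongly special hypothesis enters, as $\mathcal{F}^s_f$ is minimal and $A$ contracts $E^s_A$, so a nonzero, uniformly bounded, forward-contracted $E^s_A$-valued quantity carried along a dense family of stable leaves is impossible, forcing $\delta_v\equiv 0$. Granting this, $H$ descends to a homeomorphism $h\colon\mathbb{T}^n\to\mathbb{T}^n$ with $h\circ f=A\circ h$, the desired topological conjugacy. I would also record why the hypothesis is genuinely needed rather than technical: if $f$ is not special, $\mathcal{F}^u_{\overline{f}}$ is not $\mathbb{Z}^n$-invariant although $\mathcal{F}^u_A$ is, so an equivariant $H$ — being a homeomorphism — would have to carry the distinct foliations $\mathcal{F}^u_{\overline{f}}$ and $\mathcal{F}^u_{\overline{f}}+v$ onto the single foliation $\mathcal{F}^u_A$, which is impossible, and indeed a non-special Anosov endomorphism is never conjugate to its linearization.
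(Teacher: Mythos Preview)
First, note that the paper does not prove this theorem: it is quoted from Aoki--Hiraide \cite{AH} and stated without proof, so there is no in-paper argument to compare against. Your outline is the standard one and is correctly set up: build $H$ on $\mathbb{R}^n$ from the hyperbolicity of $A$, check it is a homeomorphism, then descend. You also isolate the genuine difficulty accurately: the forward series defining $h^u$ is automatically $\mathbb{Z}^n$-periodic, while the backward series for $h^s$ need not be because $\overline{f}^{-1}(x+v)=\overline{f}^{-1}(x)+A^{-1}v$ requires $A^{-1}v\in\mathbb{Z}^n$; hence the obstruction is exactly the $E^s_A$-valued defect $\delta_v$, and one cannot kill it by the symmetric backward estimate.

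The gap is that you do not actually prove $\delta_v=0$. The sentence ``a nonzero, uniformly bounded, forward-contracted $E^s_A$-valued quantity carried along a dense family of stable leaves is impossible'' is an intuition, not an argument: you have not said what ``carried along stable leaves'' means for $\delta_v$, nor why minimality of $\mathcal{F}^s_f$ forces vanishing. In fact $\delta_v$ is \emph{not} constant along stable leaves of $\overline f$: if $x'\in\mathcal{F}^s_{\overline f}(x)$ then $\delta_v(x')-\delta_v(x)=[H(x'+v)-H(x+v)]-[H(x')-H(x)]$ lies in $E^s_A$ but has no reason to be zero, so the heuristic you seem to have in mind does not apply. (A more useful observation is that $\delta_v$ \emph{is} constant on each unstable leaf, since the analogous difference then lies in $E^u_A\cap E^s_A=\{0\}$; equivalently, $H$ already induces a $\mathbb{Z}^n$-equivariant map on the space of stable leaves. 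This is a starting point for one route through, but turning it into $\delta_v\equiv 0$ still requires a real argument exploiting the density of $\mathcal{F}^s_f$.) Since this step is precisely where the strongly special hypothesis must enter, and is the entire content of the theorem beyond Franks--Manning, what you have written is a correct reduction to the main lemma rather than a proof of it; the Aoki--Hiraide argument filling this gap is genuinely nontrivial.
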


Our first result relates the regularity of the conjugacy and the assumption of matching corresponding SRB measures by the conjugacy.

\begin{maintheorem} \label{teo3}Let $f,g : \mathbb{T}^2 \rightarrow \mathbb{T}^2$ be two $C^{\infty}-$ Anosov endomorphism with degree $k \geq 1,$ such that their linearizations are equal to  $A,$  a strongly special linear Anosov endomorphism. Suppose that $f$ and $g$ are conjugated by $h,$ such that $h \circ f = g \circ h$ and $h_{\ast}$ sends the SRB and inverse SRB of $f$ to the corresponding analogous measures of $g.$ Then $f$ and $g$ are smoothly conjugated.
\end{maintheorem}

The definitions of SRB and inverse SRB measures are given in the preliminaries section.

We note that in Theorem \ref{teo3} we are not supposing that the endomorphisms are special. In fact, the more interesting case here is the one when the endomorphisms are not special. Theorem \ref{teo3} is the core to obtain smoothness in the next theorem.

\begin{maintheorem}\label{teo1} Let $f: \mathbb{T}^2 \rightarrow \mathbb{T}^2 $ be a $C^{\infty}$ strongly special Anosov endomorphism and $A: \mathbb{T}^2 \rightarrow \mathbb{T}^2,$ its linearization. If the corresponding periodic points of $f$ and $A $ have the same Lyapunov exponents, then $f$ and $A$ are smoothly conjugated.
\end{maintheorem}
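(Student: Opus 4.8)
The plan is to adapt to the non-invertible category the scheme that de la Llave, Marco and Moriy\'on used for Anosov diffeomorphisms of $\mathbb{T}^2$: produce a topological conjugacy, upgrade it to be smooth along the stable and along the unstable foliations using the matching of periodic data, and then glue the two leafwise regularities with Journ\'e's lemma. Since $f$ is strongly special, the Theorem of \cite{AH} quoted above gives a homeomorphism $h:\mathbb{T}^2\to\mathbb{T}^2$ with $h\circ A=f\circ h$. Because $f$ is special, $E^u_f$ is a continuous line field on $\mathbb{T}^2$ that integrates to a topological foliation $W^u_f$ with uniformly $C^\infty$ leaves, and together with the (dense) stable foliation $W^s_f$ we obtain two transverse continuous foliations with smooth leaves. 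The conjugacy $h$ carries the linear foliations $W^s_A,W^u_A$ onto $W^s_f,W^u_f$ and restricts to homeomorphisms between corresponding leaves, intertwining on each of them a one-dimensional affine map (a contraction on stable leaves, an expansion on unstable leaves) with a $C^\infty$ one.

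Next I would show that $h$ is $C^\infty$ along each of the two foliations. For the unstable direction, consider the unstable Jacobian $\varphi_u(x)=\log\|Df(x)|_{E^u_f}\|$: it is H\"older, and since $f$ is special it is a genuine function on $\mathbb{T}^2$, not just on the orbit space. The hypothesis that the corresponding periodic points of $f$ and $A$ have equal Lyapunov exponents says precisely that, for every periodic point $p$ of $A$ of period $n$, the Birkhoff sum $\sum_{j=0}^{n-1}\bigl(\varphi_u(h(A^{j}p))-\log\|DA|_{E^u_A}\|\bigr)$ vanishes. A Livsic-type theorem --- set up on the inverse limit $(\mathbb{T}^2)^f$ with the invertible shift $\tilde f$, using transitivity (the stable leaves are dense), and then pushed down to $\mathbb{T}^2$ by speciality --- provides a H\"older transfer function solving the associated cohomological equation; by the de la Llave--Marco--Moriy\'on regularity theorem for the Livsic equation this transfer function is in fact $C^\infty$, and it is exactly the datum needed to reparametrize the unstable leaves $C^\infty$-smoothly so that $f$ becomes affine along them, which forces $h$ to be $C^\infty$ on every unstable leaf with estimates uniform in the leaf. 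The same argument applied to $\varphi_s(x)=\log\|Df(x)|_{E^s_f}\|$ --- where non-invertibility causes no trouble, the stable foliation and its leafwise contraction being honestly defined on $\mathbb{T}^2$ --- gives that $h$ is uniformly $C^\infty$ along stable leaves as well.

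Finally, $h$ is uniformly $C^\infty$ along the two transverse foliations $W^s_A$ and $W^u_A$ of $\mathbb{T}^2$, so Journ\'e's regularity lemma yields $h\in C^\infty$; since $h$ is a homeomorphism whose leafwise derivatives never vanish (or by running the same argument for $h^{-1}$), $h$ is a $C^\infty$ diffeomorphism, hence a smooth conjugacy between $A$ and $f$.

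The step I expect to be the main obstacle is the leafwise smoothness in the non-invertible setting, and in particular the unstable direction. For a general Anosov endomorphism the unstable manifolds depend on the whole backward orbit and there is no ready-made Livsic theory; even when $f$ is merely special, the leafwise unstable dynamics is a degree-$k$, non-injective self-map of $W^u_f$, so one must (i) formulate the cohomological equation and the periodic-orbit obstruction on the inverse limit $(\mathbb{T}^2)^f$, where $\tilde f$ is a genuine topologically Anosov homeomorphism and Livsic applies, and (ii) check that the resulting transfer function, a priori only a function on $(\mathbb{T}^2)^f$, descends to a function on $\mathbb{T}^2$ --- which is exactly where speciality, i.e.\ the fact that $E^u_f$ and hence $\varphi_u$ depend only on $x_0$, is essential. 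Verifying that ``the same periodic data'' genuinely annihilates all the relevant Livsic obstructions for both foliations, and that all the leafwise estimates are uniform so that Journ\'e's lemma can be invoked, is the technical heart of the argument.
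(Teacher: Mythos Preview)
Your overall architecture --- topological conjugacy from \cite{AH}, Livsic for the leafwise Jacobians, leafwise regularity of $h$, then Journ\'e --- matches the paper's. The gap is not where you locate it, however. Livsic for Anosov endomorphisms is not the obstacle: the paper runs it directly on $\mathbb{T}^2$ via an Anosov closing lemma for endomorphisms \cite{Chung}, so the inverse-limit detour is unnecessary. The real problem is your claim that the de la Llave--Marco--Moriy\'on regularity theorem makes the transfer function $C^\infty$. That theorem returns the regularity of the \emph{data}, and here $\varphi_u(x)=\log\|Df(x)|_{E^u_f(x)}\|$ is only H\"older, because the line field $E^u_f$ is only H\"older on $\mathbb{T}^2$; moreover the theorem is proved for diffeomorphisms, and passing to $(\mathbb{T}^2)^f$ does not help since that space is not a manifold. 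So the Livsic/conformal-metric step, carried out honestly, yields $\phi^u\in C^\varepsilon$ and hence $h\in C^{1+\varepsilon}$ along leaves, which after Journ\'e gives $h\in C^1$ --- not $C^\infty$.

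The paper does exactly this and then closes the gap with a second, independent argument: once $h$ is $C^1$, the push-forward $h_\ast m$ is an $f$-invariant probability equivalent to Lebesgue, and Theorem~\ref{teo3} applies (with $g=A$). That theorem uses the SRB and inverse-SRB conditional densities along unstable and stable leaves; these densities are $C^\infty$ along each leaf because they are infinite products of leafwise-smooth Jacobians (the transverse regularity of $E^u_f$ is irrelevant here), and an absolutely continuous conjugacy sends conditionals to conditionals. Solving the resulting ODE for $h$ along every leaf, with an Arzel\`a--Ascoli limiting argument and a second application of Journ\'e, upgrades $h$ to $C^\infty$. This SRB bootstrap --- not Livsic regularity --- is what produces smoothness, and it is the ingredient your proposal is missing.
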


Let me clarify that when one considers studying rigidity, necessary and sufficient conditions to get differentiable conjugacy, in general, we handle with $C^1-$meager conditions, as meager as the set of $C^1$ map conjugated to a linear, as in our case. In general, it is not expected rigidity for most (generic) cases.

In a recent preprint \cite{shi}, the authors proved that $f: \mathbb{T}^2 \rightarrow \mathbb{T}^2$ is a strong special Anosov endomorphism if and only if $f$ and its linearization $A$ have the same stable periodic data, and in this case, the conjugacy is $C^{1+ \alpha}$ along stable leaves. Relying on this result our Theorem \ref{teo1} could be stated requiring only periodic data conditions. Since the preprint \cite{shi} is posterior to our work we prefer to keep the original format.

In the same direction, results on the rigidity of Anosov endomorphisms under the assumption of the regularity of foliations can be found in \cite{CV}. In this work, among other things, the authors obtain smooth conjugacy with the linearization assuming the UBD condition. For UBD condition we refer \cite{MT13}.

\begin{maintheorem}\label{teo2} Let $f: \mathbb{T}^2 \rightarrow \mathbb{T}^2 $ be a $C^{\infty}$ strongly special Anosov endomorphism. Suppose that for any $x \in \mathbb{T}^2$ are defined all Lyapunov exponents. Then $f$ is smoothly conjugated with its linearization $A. $
\end{maintheorem}

\begin{question}
In \cite{mic} is proved that if $f: M \rightarrow M$ is a $C^r, r \geq 2,$ Anosov diffeomorphism such that every $x \in M$ is regular for $f,$ then $f$ is transitive. The same statement is true for Anosov endomorphisms.
\end{question}


\section{Comments on the proofs}

In the proof of Theorem \ref{teo3}, since we are not supposing special Anosov endomorphism, we borrow the transverse foliations structure of stable and unstable manifolds of the lift of $f$ on $\mathbb{R}^2.$ We use the well-established SRB theory \cite{QZ,PDL2} for endomorphisms after applying an O.D.E argument, similar to one done in \cite{Llave92}. Finally, applying Journ\'{e}'s Lemma \cite{Journe}, we conclude that $h$ is smooth.

In the proof of Theorems \ref{teo1} we use  Livsic's Theorem to construct, via conformal metrics on leaves. Using an isometric map between corresponding leaves, we conclude that the conjugacy applies invariant leaves of $A$ to corresponding invariant leaves of $f.$ Finally, by Livsic's Theorem, the conjugacy $h$ is $C^{1+ \alpha}$ for some $\alpha > 0.$ So we apply Theorem \ref{teo3} to get smoothness.

To prove Theorem \ref{teo2} we use the specification to ensure that $f$ has constant periodic data. Using again SRB theory, Ruelle's inequality, and Pesin formula we conclude that $f$ and its linearization $A$ have the same periodic data, so we finalize by applying Theorem \ref{teo1}. For similar results on diffeomorphism setting, we refer \cite{LM20}.

In the appendix, we state and develop the proof of Theorem \ref{teo4}. In that proof, we use similar ideas to prove Theorem \ref{teo1} and the steps of \cite{Go} to get the matching of foliations.

\section{Preliminaries on SRB measures for endomorphisms} At this moment we need to work with the concept of SRB measures for endomorphisms. In fact, SRB measures play an important role in the ergodic theory of differentiable dynamical systems. For $C^{1+\alpha}-$systems these measures can be characterized as ones that realize the Pesin Formula or equivalently the measures for which the conditional measures are absolutely continuous w.r.t. Lebesgue restricted to local stable/unstable manifolds. We go to focus our attention on the endomorphism case. Before proceeding with the proof let us give important and useful definitions and results concerning SRB measures for endomorphisms.

First, let us recall an important result.

\begin{theorem}[\cite{QXZ}] Let $(M,d)$ be a compact metric space and $f: M \rightarrow M$ a continuous map. If $\mu$ is an $f-$invariant Borelian probability measure, the exist a unique $\tilde{f}-$invariant borelian probability measure $\tilde{\mu}$ on $M^f,$ such that $\mu(B) = \tilde{\mu}(p^{-1}(B)).$
\end{theorem}

\begin{definition}A measurable partition $\eta$ of $M^f$ is said to be subordinate to
$W^u-$manifolds of a system $(f, \mu)$ if for $\tilde{\mu}$-a.e. $\tilde{x} \in M^f,$ the atom $\eta(\tilde{x}),$ containing $\tilde{x},$ has the following properties:
\begin{enumerate}
\item  $p| \eta(\tilde{x}) \rightarrow p(\eta(\tilde{x}))$ is bijective;
\item  There exists a $k(\tilde{x})-$dimensional $C^1-$embedded submanifold $W(\tilde{x})$ of $M$ such that $W(\tilde{x}) \subset W^u(\tilde{x}),$
$$p(\eta(\tilde{x})) \subset W(\tilde{x})$$
and $p(\eta(\tilde{x}))$ contains an open neighborhood of $x_0 $ in $W(\tilde{x}).$ This neighborhood
being taken in the topology of $W(\tilde{x})$ as a submanifold of $M.$
\end{enumerate}
\end{definition}

We observe that by Proposition 3.2 of \cite{QZ}, such partition can be taken increasing, that means $\eta$ refines $\tilde{f}(\eta) .$ Particularly $ p(\eta(\tilde{f}(\tilde{x}))) \subset p(\tilde{f}(\eta(\tilde{x}))) .$

\begin{definition} Let $f: M \rightarrow M$ be a $C^2-$endomorphism preserving an invariant borelian probability $\nu.$
We say that $\nu$ has SRB property if for every measurable partition $\eta $ of $M^f$ subordinate
to $W^u-$manifolds of $f$  with respect to $\nu$, we have $p(\tilde{\nu}_{\eta{(\tilde{x})}})  \ll m^u_{p(\eta(\tilde{x}))},$  for $\tilde{\nu}-$a.e. $\tilde{x}$, where
$\{\tilde{\nu}_{\eta{(\tilde{x})}} \}_{\tilde{x} \in M^f}$
is a canonical system of conditional measures of $\tilde{\nu}$ associated with $\eta,$
and $m^u_{p(\eta(\tilde{x}))} $ is the Lebesgue measure on $W(\tilde{x})$ induced by its inherited Riemannian metric as a submanifold of $M.$
\end{definition}

 In the case of above definition, if we denote by $\rho^u_f$ the densities of conditional measures $\tilde{\nu}_{\eta({\tilde{x}})},$ we have
 \begin{equation} \label{conditionalU}
 \rho^u_f(\tilde{y}) =
\frac{\Delta^u_f(\tilde{x}, \tilde{y} )}{L(\tilde{x})},
 \end{equation}
for each $\tilde{y} \in \eta({\tilde{x}}),$ where
$$ \Delta^u_f(\tilde{x},\tilde{y}) = \displaystyle\prod_{k=1}^{\infty} \frac{J^uf(x_{-k})}{J^uf(y_{-k})}, \tilde{x} = (x_k)_{k \in \mathbb{Z}}, \tilde{y} = (y_k)_{k \in \mathbb{Z}}  $$
and
$$L(\tilde{x}) = \int_{\eta(\tilde{x})} \Delta^u_f(\tilde{x}, \tilde{y}) d \tilde{m}^u_{\eta({\tilde{x}})}(\tilde{y}).$$

The measure $\tilde{m}^u_{\eta({\tilde{x}})}$ is such that $p(\tilde{m}^u_{\eta({\tilde{x}})})(B) = m^u_{p(\eta({\tilde{x}}))}(B).$  Therefore $$p(\tilde{\nu}_{\eta({\tilde{x}})}) \ll m^u_{p(\eta({\tilde{x}}))},  $$ and
$$\rho^u_f(y) =
\frac{\Delta^u_f(\tilde{x}, \tilde{y})}{L(\tilde{x})}, y \in p(\eta({\tilde{x}})).$$

\begin{theorem}{\cite{PDL1}}\label{pesin1} Let $f : M \rightarrow M$ be a $C^2$ endomorphism and $\mu$ an
$f-$invariant Borel probability measure on $M.$ If $\mu \ll m,$ then there holds
Pesin's formula
\begin{equation}
h_{\mu}(f) =  \displaystyle\int_M \displaystyle\sum \lambda^i(x)^{+}m_i(x) d\mu.
\end{equation}
\end{theorem}

\begin{theorem}[\cite{QZ}] \label{pesin2} Let $f$ be a $C^2$ endomorphism on $M$  with an invariant Borel probability
measure $\mu$ such that $\log(|Jf(x)|) \in L^1(M,\mu).$ Then the entropy
formula
\begin{equation}\label{PesinU}
h_{\mu}(f) =  \displaystyle\int_M \displaystyle\sum \lambda^i(x)^{+}m_i(x) d\mu
\end{equation}
holds if and only if $\mu$ has SRB property.
\end{theorem}

%

There are analogous formulations concerning subordinate partition with respect to stable manifolds, which can be take decreasing, that means $f^{-1} (\eta) \preceq \eta,$ see \cite{PDL2}, Proposition 4.1.1.   In the sense of hyperbolic repellors, including  Anosov endomorphisms, there is an important result concerning inverse SRB measures.

\begin{theorem}\label{teonegativo}[Theorem 3 of \cite{Mh1} and Theorems 2.3 and 2.6 of \cite{PDL2}] Let $\Lambda$ be a connected hyperbolic repellor for a smooth $f: M \rightarrow M .$ Assume that $f$ is $d$ to one, then there is a unique $f-$invariant probability measure $\mu^{-}$ on $\Lambda$ satisfying the inverse Pesin formula

\begin{equation}\label{PesinS}
h_{\mu^{-}}(f) = \log(d) - \displaystyle\int_M \displaystyle\sum \lambda^i(x)^{-}m_i(x) d\mu^{-}.
\end{equation}
In addition, the measure $\mu^{-}$ is characterized by having absolutely continuous conditional measures on local stable
manifolds.

\end{theorem}

In the setting of the previous Theorem, if $(f, \mu)$ satisfies the Stable Pesin Formula \ref{PesinS}, then for a given subordinate partition $\eta,$ with respect to stable manifolds, we have $$ \mu_{\eta (x)} \ll m^s_{\eta (x)},$$ for $\mu-$ a.e $x \in M.$ Moreover
\begin{equation}\label{conditionalS}
\rho^s_f(x) = \frac{\Delta^s_f(x,y)}{\int_{\eta(x)} \Delta^s_f(x,y)dm^s_{\eta (x)} }, \; \forall y \in \eta(x).
\end{equation}

Here $\Delta^s_f(x,y) = \prod_{k = 0}^{\infty} \frac{Jf(f^k(x))}{Jf(f^k(y))}\cdot \frac{J^sf(f^k(x))}{J^sf(f^k(y))}.$
See \cite{PDL2} as a reference.

The theorems on Pesin formulas are true in our setting since every tori Anosov endomorphism is transitive, see \cite{AH}.

We finalize the preliminaries section with a lemma whose proof is essentially the same as Corollary 4.4 of \cite{Llave92}, up to minor adjustments using local inverses.

\begin{lemma} For a $C^{k}, k \geq 2,$ Anosov endomorphism, the conditional measures of stable and unstable SRB measures restricted to stable and unstable leaves respectively are $C^{k-1}.$ In particular, if $f$ is smooth, then the conditional measures are smooth.
\end{lemma}

\section{Proof of Theorem \ref{teo3}}

\begin{lemma}\label{arc} Consider $f$ as Theorem \ref{teo3}, then given $V$ a $s-$foliated neighborhood there exist $R > 0,$  such that every stable arc with size bigger than $R$ crosses $V.$ An analogous statement holds for unstable leaves.
\end{lemma}

\begin{proof}
First, since $f$ is continuous, it is not hard to see that a set $D$ is dense in $\mathbb{T}^2$ if and only if $p^{-1}(D)$ is dense in the limit inverse space $M^f = (\mathbb{T}^2)^f.$ It is known by  \cite{AH} that $f$ and $A$ are conjugated in the limit inverse level. Since $A$ has dense stable leaves we conclude that all stable leaf of $f$ is also dense. Given $x \in M$ the leaf $W^s_f(x)$ is dense in $M,$ then there is $R(x) > 0$ such that any stable arc starting in $x$  with size $ R \geq R(x)$ crosses $V.$ By continuity of stable manifold there is a neighborhood $B(x) \in M$ such that $z \in B(x),$ then any stable arc starting in $z$ with size $2R(x)$ crosses $V.$ By compactness of $M$ there is a finite cover $B(x_1), \ldots, B(x_k)$ of $M.$ Choose $R = \max\{ 2R(x_i)\}, i=1,\ldots, k.$
\end{proof}

It is know that given $f:M \rightarrow M$ an Anosov endomorphism, its lift $\bar{f}: \overline{M} \rightarrow \overline{M} $ is an Anosov diffeomorphism. For $\overline{f}$ makes sense unstable and stable invariant foliations of $\overline{M}.$ Locally we can use the natural projection $\pi: \overline{M} \rightarrow M,$ to consider in $M$ locally unstable and stable foliations of $f. $ We need to prove that the conjugacy $h$ between $f$ and $g$ as in Theorem \ref{teo3} is smooth restricted to each local leaf projected by $\pi$ and conclude the result by using Journ\'{e}'s Theorem.

\begin{lemma} Consider $f$ and $g$ as Theorem \ref{teo3}.  Given $z_0 \in M,$  consider $V$ a small neighborhood of $z_0$ foliated by $\mathcal{F}^s_f$ and $\mathcal{F}^u_f$ projected by $\pi: \overline{M} \rightarrow M.$ Then the conjugacy $h$ is smooth restricted to each stable and unstable local leaf in $V.$
\end{lemma}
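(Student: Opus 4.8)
The goal is to show that the conjugacy $h$ between $f$ and $g$ is $C^\infty$ when restricted to each local stable leaf and each local unstable leaf in the foliated neighborhood $V$ obtained by projecting the invariant foliations of the lift $\overline{f}$. Since the two cases are formally similar (working with local inverses on the unstable side), I will describe the unstable case and indicate the modifications for the stable case. The plan is to reduce the problem to a one-dimensional regularity statement along individual leaves and then to apply the $C^{k-1}$-regularity of the S.R.B.\ conditional densities (the Lemma at the end of Section~3, following Corollary 4.4 of \cite{Llave92}), together with the fact, already established in the first Lemma of this section, that $h$ pushes $m_f$ forward to $m_g$.

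\textbf{Step 1: local leafwise conjugacy.} First I would fix $z_0$ and a local unstable leaf $\ell = \mathcal{F}^u_f(z_0) \cap V$, obtained as the $\pi$-projection of an unstable leaf of $\overline{f}$. Since $g_\ast = f_\ast = A$ is special, the unstable foliation of $g$ is also well-defined locally (pointwise), and the conjugacy $h$ maps stable leaves of $f$ to stable leaves of $g$ and (locally, after choosing compatible inverse branches) unstable leaves of $f$ to unstable leaves of $g$; this is the standard structure of the conjugacy at the inverse-limit level. Restricting $h$ to $\ell$ thus gives a homeomorphism $h|_\ell : \ell \to h(\ell)$ between two one-dimensional $C^\infty$ curves. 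The aim is to show this homeomorphism is a $C^\infty$ diffeomorphism.

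\textbf{Step 2: identify $h|_\ell$ with the map matching S.R.B.\ conditional densities.} By the S.R.B.\ theory for endomorphisms recalled in Section~3 (formula \eqref{conditionalU}), the unstable conditional measures of $m_f$ on $p(\eta(\tilde{x}))$ have density $\rho^u_f$, which is $C^\infty$ by the final Lemma of Section~3; likewise for $m_g$ with density $\rho^u_g$. Because $h_\ast(m_f) = m_g$ and $h$ respects the unstable foliations locally, $h|_\ell$ carries the (normalized) measure with density $\rho^u_f$ to the one with density $\rho^u_g$. Parametrizing $\ell$ and $h(\ell)$ by $C^\infty$ arc-length coordinates $t$ and $s$, the map $s = \varphi(t) := (h|_\ell)(t)$ is then characterized by
\begin{equation}
\int_0^t \rho^u_f(\tau)\, d\tau \;=\; c\int_0^{\varphi(t)} \rho^u_g(\sigma)\, d\sigma,
\end{equation}
for a normalizing constant $c$. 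Since $\rho^u_f,\rho^u_g$ are $C^\infty$ and strictly positive, the right-hand side is a $C^\infty$ diffeomorphism of the relevant interval, the left-hand side is $C^\infty$, and differentiating gives $\varphi'(t) = \rho^u_f(t)\big/\big(c\,\rho^u_g(\varphi(t))\big) > 0$; the implicit function theorem (or a direct bootstrap on this ODE) then yields $\varphi \in C^\infty$. For the stable case one argues identically using the stable S.R.B.\ measures $\mu^-_f, \mu^-_g$ and the inverse Pesin formula, with conditional densities $\rho^s_f, \rho^s_g$ from \eqref{conditionalS}, which are again $C^\infty$; here one uses local inverse branches of $f$ and $g$ to make sense of the stable conditional structure, exactly the ``minor adjusts'' mentioned before the final Lemma of Section~3.

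\textbf{Main obstacle.} The delicate point is Step~2: one must verify that $h$ genuinely intertwines the unstable (resp.\ stable) conditional measures of $m_f$ and $m_g$, not merely the global measures. Because $f,g$ are not assumed special as genuine endomorphisms, the unstable ``leaves'' depend on the full orbit in the inverse limit, so the identification must be carried out at the level of $M^f$ and $M^g$ using the subordinate partitions $\eta$ and their images under the induced conjugacy $\tilde h : M^f \to M^g$; one checks that $\tilde h$ maps a partition subordinate to $W^u$-manifolds of $f$ to one subordinate to $W^u$-manifolds of $g$ and that $\tilde h_\ast \tilde m_f = \tilde m_g$ (this last being the inverse-limit lift of the first Lemma of the section). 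Once this bookkeeping is in place, uniqueness of the disintegration forces $h|_\ell$ to match the conditional densities, and the one-dimensional ODE argument closes the proof. Combining the stable and unstable leafwise smoothness is then exactly the hypothesis needed to invoke Journ\'e's theorem in the global argument, but that step belongs to the proof of Theorem~\ref{teo3}, not to this lemma.
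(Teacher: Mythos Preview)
Your core mechanism---recover $h|_\ell$ from the identity $h_\ast(\rho^u_f\,dm^u)=\rho^u_g\,dm^u$ and solve the resulting ODE $\varphi'=\rho^u_f/(c\,\rho^u_g\circ\varphi)$---is exactly the engine the paper uses. The gap is in how you pass from ``$h_\ast m_f=m_g$'' to ``$h|_\ell$ matches the unstable conditionals on the specific leaf $\ell=\mathcal{F}^u_f(z_0)\cap V$''. Uniqueness of disintegration only gives this for $\tilde m_f$-a.e.\ atom of a subordinate partition; it says nothing about the particular orbit $\tilde z_0$ whose local unstable manifold is the projected leaf through $z_0$. Since $f,g$ are not assumed special, the leaf $\ell$ corresponds to one choice of past orbit among uncountably many, and there is no a priori reason it lies in the good full-measure set. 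Your ``Main obstacle'' paragraph identifies the right bookkeeping at the inverse-limit level but then invokes uniqueness of disintegration as if it settled the pointwise question; it does not.

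The paper closes this gap by an approximation argument rather than by working directly at $z_0$. It first chooses an orbit $\tilde x$ that \emph{is} in the good set and whose forward orbit is dense, runs your ODE on a small arc $B^u_{x_0}\subset p(\eta(\tilde x))$ to get $h$ smooth there, and then transports smoothness to every iterate $f^n(B^u_{x_0})$ via $h\circ f=g\circ h$ and the chain rule. Because the partition can be taken increasing, each iterate arc again carries the explicit conditional density, so $h$ on $W_n\subset f^n(B^u_{x_0})\cap V$ solves an ODE of the same form $x'=\Delta^u_f(y_0,t)/\Delta^u_g(h(y_0),x)$, with derivative bounds uniform in $n$ (as in Lemma~4.3 of \cite{Llave92}). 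One then takes $W_n\to\mathcal{F}^u_f(z_0)$ in $C^1$ (Theorem~1.12 of \cite{PRZ}) and applies an Arzel\`a--Ascoli argument to the sequence $h_n=h|_{W_n}$ and all its derivatives to conclude $h$ is $C^\infty$ on $\mathcal{F}^u_f(z_0)$. For stable leaves the same scheme is run with backward iterates; Lemma~\ref{arc} is what guarantees that long stable arcs in $f^{-n}(B^s_{x_0})$ eventually cross $V$ and $C^1$-approach $\mathcal{F}^s_f(z_0)$. Your write-up omits this entire approximation/Arzel\`a--Ascoli layer, which is precisely the device that upgrades the a.e.\ statement to the arbitrary point $z_0$.
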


\begin{proof} First, consider $m_f$ and $m_g$ the respective SRB measures for $f$ and $g,$ then for both holds the Pesin Formula (see Theorem \ref{pesin1} ) and the theory of SRB measures of \cite{QZ}, consequently have absolutely continuous disintegration along unstable manifolds.
For the partition $\eta_k = \tilde{f}^k(\eta),$ where $\eta$ is any subordinate partition w.r.t unstable leaves,  consider the $\widetilde{m_f}-$full measure set of points $X_k,$ of points satisfying $(\ref{conditionalU}).$ Now take $X = \bigcap_{k=0}^{+\infty} X_k,$ and finally $\mathcal{T} = \bigcap_{j=0}^{+\infty} \tilde{f}^{-j} X .$ The projection on $\mathbb{T}^2$ of $\mathcal{T}$ has $m_f-$full measure. So given any $x \in \mathcal{T} ,$ the iterates $f^k(x), k \geq 0,$ satisfies $(\ref{conditionalU})$ for the corresponding projection of $\eta_k = \tilde{f}^k(\eta).$

 Since $h_{\ast}(m_f) = m_g,$ then  $h$ sends conditional measures of $(f, m_f)$ in conditional measures of $(g, m_g).$ Since these measures are equivalent to Riemannian measures of unstable leaves, so $h$ sends null sets of $p(\eta(\tilde{x}))$ in null sets of $p(\eta(\tilde{h}(\tilde{x})))$ with respect to  Riemannian measures of unstable leaves, where $\tilde{h}$ is the conjugacy at level of limit inverse space between $\tilde{f}$ and $\tilde{g}.$

Consider $B^u_{x_0} \subset \eta(\tilde{x}) $ a small open unstable arc. Since $h$ is absolutely continuous
$$\int_{B^u_{x_0}} \rho^u_f(y) dy  = \int_{h({B^u_{x_0}})} \rho^u_g(y) dy = \int_{B^u_{x_0}} \rho^u_g(h(y)) h'(y) dy,  $$
therefore solving the O.D.E.
\begin{equation}\label{ODE1}
x' = \frac{\rho^u_f(t)}{\rho^u_g(x)}, x(x_0) =  h(x_0),
\end{equation}
we find $h$ is $C^{\infty}$ on $B^u_{x_0}.$

Since the unstable leaves are dense in $(\mathbb{T}^2)^f,$ because $\tilde{A}$ and $\tilde{f}$ are conjugate in the limit inverse level. So by denseness and Lemma \ref{arc} we can get a sequence of arcs  $W_n \subset f^n(B^u_{x_0}) \cap V,$ is such that  $W_n \rightarrow_{C^1} \mathcal{F}^u_f(z_0),$ where $\mathcal{F}^u_f(z_0)$ is the local unstable manifold projected of $\overline{M}$ at $z_0$ in $V.$

Since the subordinate partition can be taken increasing, see Proposition 3.2 of \cite{QZ}, the conjugacy $h$ restricted to $W_n$ satisfies an analogous O.D.E, as in $(\ref{ODE1}).$

Normalizing the conditional measures such that $$ \int_{W_n} c_n\cdot \rho^u_f(t) dVol_{W_n} = 1,$$ since $h_{\ast}(\rho^u_f(t) dVol_{W_n}) = \rho^u_g(t) dVol_{h(W_n)},$ then $h$ send normalized conditional measures into normalized conditional measures. For simplicity consider $c_n = 1,$ for each $n$ and consider normalized densities $\rho^u_f$ and $\rho^u_g.$

For the points $y \in W_n,$ take the initial condition $y_0,$ where $y_0$ is an arbitrarily chosen point in $W_n.$ We know $$\rho^u_f(y) = \alpha_n\cdot\Delta^u_f(y_0, y),$$ for some constant $\alpha_n.$ We note that $\alpha_n$ is bounded and far from zero since we size of $W_n$ is uniformly bounded as well as the value of $\Delta^u_f.$ For $g,$ by analogous reason
$$\rho^u_g(y) = \beta_n\cdot\Delta^u_g(y_0, y),$$
$\beta_n$ is bounded and far from zero. Since $\frac{\alpha_n}{\beta_n}$ is positive far from zero and uniformly bounded, for simplicity, we suppose that $\alpha_n = \beta_n.$
In this way, by relation $(\ref{ODE1})$,   $h$ satisfies the following O.D.E,
$$x' = \frac{\Delta^u_f( y_0 , t)}{\Delta^u_g( h(y_0), x)}, x(y_0) =  h(y_0),$$
for each pair of connected component $W_n$ and  $h(W_n).$

Denoting by $h_n$ the solution of the above equation, we note that the solution $h_n$ is smooth. The map $h_n$ is the restriction of the conjugacy $h$ on $W_n.$ Analogous to Lemma 4.3 of \cite{Llave92}, for each component $W_n$ we have a collection $\{h_n: W_n \rightarrow h(W_n)\}_{n = 1}^{\infty},$  is uniform bounded as well the collection of their derivatives of order $r = 1,2,\ldots.$   By an Arzela-Ascoli argument type applied to a sequence $h_n$ and the sequence of their derivatives, we conclude that $h$ is $C^{\infty}$ restricted to $\mathcal{F}^u_f(z_0).$

For stable leaves, we use a similar argument, arguing with inverse SRBs $m_f^{-}$ and $m_g^{-},$ such that $h_{\ast}(m_f^{-}) = m_g^{-}.$ The stable foliation restricted to $V$ is an absolutely continuous foliation, then for $m_f^{-}-$a.e. point $t \in \mathbb{T}^2$  holds \eqref{conditionalS} for any point $y \in \eta(t).$  The connected components of $f^{-n}(B^s_{x_0})$ grows exponentially. By Lemma \ref{arc} we can choose stable arcs in pre-images such that
$$f^{-n}({B^s_{x_0}})\cap V \rightarrow_{C^1} \mathcal{F}^s_f(z_0) $$
in $C^1-$topology.

Relying in the expression $(\ref{conditionalS})$ for a decreasing subordinate partition with respect stable manifolds, as in the argument for unstable leaves, via O.D.E,  $h$ is $C^{\infty}$ restricted to each component of pre images $f^{-n}({B^s_{x_0}}).$ By an Arzela-Ascoli type argument we obtain $h$ is $C^{\infty}$ restricted to $\mathcal{F}^s_f(z_0).$

The same argument can be applied for any point $z  \in V.$
\end{proof}

To finalize the proof of Theorem \ref{teo3}, we evoke the following classic result applied to $h.$

\begin{theorem}[Journ\'{e}'s Theorem] Let $F_s$ and $F_u$ two continuous and transversal foliations with uniformly smooth leaves, of some manifold. If $f$ is uniformly $C^{r + \alpha}, \alpha > 0$ and $r \geq 1$ along the leaves of $F_s$ and $F_u,$ then $f$ is $C^{1+\alpha -\varepsilon},$ for any $\varepsilon > 0.$ Particularly, if $r = \infty,$ we conclude that $f $ is $C^{\infty}.$
\end{theorem}

We conclude that $h$ is smooth.

\section{Proof of Theorem \ref{teo1}}

Here, first, we prove that $f$ and $A$ are $C^1,$ conjugated, so $f$ preserves a measure equivalent to Lebesgue and so we can apply Theorem \ref{teo3} to conclude that $f$ and $A$ are smoothly conjugated. For this, we need some tools to proceed as in \cite{LM20}.

An important tool related to Livsic's Theorem. By \cite{Chung} is known a version of the shadowing lemma for endomorphisms.

\begin{proposition}[Closing Lemma for Endomorphisms, Lemma 3 of \cite{Chung}] For $0\leq k\leq\dim M,$ $\chi>0,$ $l\geq 1$ and $\rho>0$ there exists
a number $\gamma_{l}(\rho)=\gamma_{l}(k, \chi, \rho)>0$ such that, if $\tilde{x}=(x_{n})\in\tilde{\Lambda}_{\chi,l}^{k}$ satisfies
$f^{m}(\tilde{x})\in\tilde{\Lambda}_{\chi,l}^{k},$  $d(f^{m}(\tilde{x}),\tilde{x})\leq\gamma_{l}(p)$
for some $m\geq 1,$  then there is a hyperbolic periodic point $p=p(\tilde{x})\in M$ of$f$ with $f^{m}(p)=p$
such that
$d(f^{j}(p), x_{j})\leq\rho$
for all $0\leq j\leq m-1$ .
\end{proposition}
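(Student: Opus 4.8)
The plan is to prove this as a non-uniformly hyperbolic closing lemma in the inverse limit space, in the spirit of Katok's closing lemma: pass to Lyapunov charts over the Pesin block $\tilde{\Lambda}_{\chi,l}^{k}$, close up the almost-periodic orbit segment by a hyperbolic fixed point argument, and control the shadowing constant so that it depends only on $k$, $\chi$, $l$ and $\rho$. First I would recall that on $\tilde{\Lambda}_{\chi,l}^{k}$ there is a continuous family of coordinate charts $\Phi_{\tilde{y}} \colon U_{\tilde{y}} \to \mathbb{R}^k \times \mathbb{R}^{\dim M - k}$, built along the entire orbit $(y_n)_{n \in \mathbb{Z}}$ by composing $Df$ with the appropriate local inverse branches of $f$; the fact that $f$ is only a local diffeomorphism is exactly why one must remember the past coordinates $y_n$, $n<0$, and hence why the block lives in $M^f$. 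The size of $U_{\tilde{y}}$ is bounded below by a constant depending only on $l$, the connecting maps $\Phi_{\tilde{f}\tilde{y}} \circ f \circ \Phi_{\tilde{y}}^{-1}$ are $C^1$-close to the linear hyperbolic map that contracts the first factor by at most $e^{-\chi}$ and expands the second by at least $e^{\chi}$, and compactness of $\tilde{\Lambda}_{\chi,l}^{k}$ in $(M^f, \bar{d})$ makes all these bounds uniform over the block.

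\textbf{Closing up.} The hypothesis $\bar{d}(\tilde{f}^m(\tilde{x}), \tilde{x}) \le \gamma_l(\rho)$ forces $d(x_{j+m}, x_j) \le C \gamma_l(\rho)$ for $j$ in a window around $0$, so in particular $x_m$ is close to $x_0$ and the charts $\Phi_{\tilde{f}^m \tilde{x}}$ and $\Phi_{\tilde{x}}$ nearly coincide. I would then consider the return map $G := \Phi_{\tilde{x}} \circ f^m \circ \Phi_{\tilde{x}}^{-1}$, with the local inverse branches fixed along the segment $x_0, \dots, x_m$; this is a $C^1$ map of a neighborhood of $0$ that is $O(\gamma_l(\rho))$-close to a hyperbolic linear isomorphism with the rates above. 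The standard hyperbolic fixed point lemma (a graph transform/contraction argument on the stable and unstable factors) produces a unique fixed point $z^\ast$ of $G$ with $\|z^\ast\| = O(\gamma_l(\rho))$, and $p := \Phi_{\tilde{x}}^{-1}(z^\ast)$ is a genuine periodic point of $f$ in $M$ with $f^m(p) = p$.

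\textbf{Shadowing and hyperbolicity of $p$.} The $G$-orbit of $z^\ast$ stays inside the chart domains provided $\gamma_l(\rho)$ is small compared to the $l$-dependent lower bound on their size, and each coordinate remains $O(\gamma_l(\rho))$-close to $0$; since the charts are uniformly bi-Lipschitz, pulling back gives $d(f^j(p), x_j) \le C'' \gamma_l(\rho)$ for $0 \le j \le m-1$, and one sets $\gamma_l(\rho) = \gamma_l(k, \chi, \rho)$ so that $C'' \gamma_l(\rho) \le \rho$. Because the orbit of $p$ then $\rho$-shadows a true orbit lying in $\tilde{\Lambda}_{\chi,l}^{k}$, for $\rho$ small the uniform hyperbolic estimates transfer to $p$, so $p$ is hyperbolic with stable dimension $k$.

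I expect the main obstacle to be carrying out the first two steps correctly in the non-invertible setting: one must set up the Lyapunov charts and the return map $G$ with the right choice of local inverse branches so that $G$ is honestly hyperbolic and the fixed point $z^\ast$ corresponds to a bona fide periodic point of $f$ rather than merely a periodic pseudo-orbit in $M^f$; one must also check that every constant feeding into $\gamma_l(\rho)$ depends only on $k$, $\chi$, $l$ and $\rho$, which is where compactness of the Pesin block and continuity of the chart family on it are indispensable.
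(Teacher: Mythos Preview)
The paper does not prove this proposition at all: it is quoted verbatim as Lemma~3 of \cite{Chung} and used as a black box to justify the Anosov Closing Lemma and, through it, the Livsic theorem. So there is no ``paper's own proof'' to compare against.

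That said, your outline is the standard Katok-style closing argument transported to the inverse limit, and this is indeed the approach Chung takes. A few remarks on accuracy: the hypothesis $\bar d(\tilde f^m(\tilde x),\tilde x)\le\gamma_l(\rho)$ in the $M^f$-metric does \emph{not} by itself give $d(x_{j+m},x_j)\le C\gamma_l(\rho)$ uniformly in $j$, since the weights $2^{-|j|}$ decay; what one actually uses is that both $\tilde x$ and $\tilde f^m(\tilde x)$ lie in the Pesin block, so the Lyapunov charts at these two points are comparable, and then the contraction/expansion along the segment $x_0,\dots,x_m$ propagates the closeness. Also, your shadowing estimate $d(f^j(p),x_j)\le C''\gamma_l(\rho)$ for all $0\le j\le m-1$ is slightly too strong as stated; the usual conclusion is only $d(f^j(p),x_j)\le\rho$, with the constant absorbed into the choice of $\gamma_l(\rho)$, exactly as you then say. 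These are cosmetic issues; the structure of your argument is correct and matches the cited source.
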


The point $p$ above is unique. In the context of Anosov endomorphisms, there is a suitable choice of constants such that $\tilde{\Lambda}_{\chi,l}^{k} = M,$ in this specific case the above proposition is known by Anosov Closing Lemma. Endowed with the Anosov Closing Lemma we can prove, using the same argument as the version for diffeomorphisms, the following version of Livsic's Theorem.

\begin{theorem}[Livsic's Theorem] Let $M$ be a Riemannian manifold, $f:M \rightarrow M$ a transitive smooth
Anosov endomorphism and  $\varphi: M  \rightarrow \mathbb{R}$
an $\alpha-$ H\"{o}lder function. Suppose that for every $x \in M$ such that $f^n(x) = x,$ we have
$\displaystyle\sum_{i=0}^{n-1}\varphi(f^i(x)) = 0.$ Then there exists a unique  $\alpha-$ H\"{o}lder function $ \phi:  M  \rightarrow \mathbb{R},$ such that
$\varphi(x) = \phi(f(x))- \phi(x)$ and $\phi$  is unique up to an additive constant.
\end{theorem}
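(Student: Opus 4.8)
The plan is to run the classical Livsic argument, the only genuinely new ingredient being that the role of the Anosov Closing Lemma is played here by its endomorphism version (the Closing Lemma for Endomorphisms above), applied to orbits in the inverse limit $M^{f}$, which for an Anosov endomorphism is all of $M^{f}$. First I would fix, using transitivity of $f$ (the set of points with dense forward orbit is residual in $M$), a point $z\in M$ whose forward orbit $\mathcal{O}^{+}(z)=\{f^{n}(z):n\ge 0\}$ is dense. Such a $z$ is not pre-periodic (a pre-periodic point has finite forward orbit, hence non-dense in the compact infinite manifold $M$), so $n\mapsto f^{n}(z)$ is injective and one may define unambiguously $\phi_{0}\colon\mathcal{O}^{+}(z)\to\mathbb{R}$ by $\phi_{0}(z)=0$ and $\phi_{0}(f^{n}(z))=\sum_{i=0}^{n-1}\varphi(f^{i}(z))$ for $n\ge 1$. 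By telescoping, $\phi_{0}(f(y))-\phi_{0}(y)=\varphi(y)$ for every $y\in\mathcal{O}^{+}(z)$, so the problem reduces to showing that $\phi_{0}$ is uniformly $\alpha$-Hölder on $\mathcal{O}^{+}(z)$ and then extending by continuity.

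The central estimate is: there exist $C>0$ and $\delta_{0}>0$ with $|\phi_{0}(f^{n}(z))-\phi_{0}(f^{m}(z))|\le C\,d(f^{n}(z),f^{m}(z))^{\alpha}$ whenever $d(f^{n}(z),f^{m}(z))<\delta_{0}$. To prove it I would take $n<m$, set $w=f^{n}(z)$, $k=m-n$, $\rho=d(w,f^{k}(w))$, and lift the length-$k$ orbit segment of $w$ to $M^{f}$; since $\rho$ is small and, in the Anosov case, the relevant set exhausts $M^{f}$, the Closing Lemma produces a periodic point $p$ with $f^{k}(p)=p$ shadowing the segment. The point where I expect the real work to be is the \emph{refined} shadowing estimate $d(f^{j}(p),f^{j}(w))\le C'\bigl(\lambda^{-j}+\lambda^{-(k-j)}\bigr)\rho$ for $0\le j\le k-1$, i.e.\ that the closing orbit tracks the segment with error decaying geometrically away from the two endpoints of the time window; this goes slightly beyond the bare statement quoted above, but is obtained by the same hyperbolic construction as for diffeomorphisms, now carried out with local inverses of $f$ along the shadowing orbit. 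Granting it, and using $\sum_{j=0}^{k-1}\varphi(f^{j}(p))=0$ from the hypothesis applied to the period $k$, Hölder continuity of $\varphi$ gives
\[|\phi_{0}(f^{m}(z))-\phi_{0}(f^{n}(z))|=\Bigl|\sum_{j=0}^{k-1}\bigl(\varphi(f^{j}(w))-\varphi(f^{j}(p))\bigr)\Bigr|\le[\varphi]_{\alpha}\sum_{j=0}^{k-1}d(f^{j}(w),f^{j}(p))^{\alpha}\le C\rho^{\alpha},\]
since $\sum_{j}\bigl(\lambda^{-j}+\lambda^{-(k-j)}\bigr)^{\alpha}$ is bounded independently of $k$ because $\lambda>1$ and $\alpha\le 1$.

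With the estimate in hand the rest is routine, and I would finish as follows. For $y\in M$ and any sequence $f^{n_{i}}(z)\to y$, the values $\phi_{0}(f^{n_{i}}(z))$ are eventually Cauchy, so one may set $\phi(y):=\lim_{i}\phi_{0}(f^{n_{i}}(z))$; the same estimate shows this is independent of the approximating sequence, so $\phi$ well-defines an extension of $\phi_{0}$ with $|\phi(y)-\phi(y')|\le C\,d(y,y')^{\alpha}$ for $d(y,y')<\delta_{0}$, which upgrades to a global $\alpha$-Hölder bound once one notes that the now-continuous $\phi$ is bounded on the compact manifold $M$. Both sides of $\phi\circ f-\phi=\varphi$ are then continuous and coincide on the dense set $\mathcal{O}^{+}(z)$, hence everywhere. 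For uniqueness, if $\phi_{1},\phi_{2}$ are two continuous solutions then $\psi=\phi_{1}-\phi_{2}$ satisfies $\psi\circ f=\psi$, so $\psi$ is constant along $\mathcal{O}^{+}(z)$ and therefore constant on $M$ by density and continuity; this gives the primitive up to an additive constant. The only substantive obstacle is thus the refined, exponentially-decaying version of the shadowing in the Closing Lemma underlying the Hölder bound on $\phi_{0}$; everything else transcribes the diffeomorphism proof, using local inverses of $f$ wherever invertibility was previously used.
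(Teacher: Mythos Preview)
Your proposal is correct and matches the paper's own approach: the paper does not give a self-contained proof but simply notes that, armed with the Anosov Closing Lemma for endomorphisms, one can reproduce verbatim the diffeomorphism argument (referring to Katok--Hasselblatt, p.~610), which is precisely what you have spelled out. Your explicit identification of the refined exponential shadowing estimate as the one point requiring care (obtained via local inverses along the shadowing orbit) is exactly the minor adaptation the paper has in mind.
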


For the proof see \cite{K}, page 610.

Let us introduce conformal distances on each invariant one-dimensional leaf.

\begin{lemma} There exists a metric $d^u$ on each leaf $W^{u}_f(x)$ tangent to $E^{u}_f,$  such that  $d^u(f(a), f(b)) = e^{\lambda^u} d^u(a, b),$ where $\lambda^u$ the common value of the Lyapunov exponents of periodic points of $f$ and $A$ relative to directions $E^u_f$ and $E^u_A$ respectively.
\end{lemma}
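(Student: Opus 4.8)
The plan is to construct the metric $d^u$ by integrating a conformal density along each unstable leaf, where the density is obtained from Livsic's Theorem applied to the logarithm of the unstable Jacobian. Concretely, define $\varphi: M \to \mathbb{R}$ by $\varphi(x) = \log \|Df(x)|_{E^u_f(x)}\| - \lambda^u$, which makes sense because $f$ is special (so $E^u_f$ depends only on $x_0$ and $\varphi$ is a genuine function on $M$, indeed $C^{1+\alpha}$ since $E^u_f$ varies Hölder-continuously and $f$ is smooth). For every periodic point $p$ with $f^n(p) = p$, the hypothesis that $f$ and $A$ have equal Lyapunov exponents at corresponding periodic points gives $\sum_{i=0}^{n-1} \log\|Df(f^i(p))|_{E^u_f}\| = n\lambda^u$, hence $\sum_{i=0}^{n-1}\varphi(f^i(p)) = 0$. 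Livsic's Theorem (the endomorphism version stated above) then yields an $\alpha$-Hölder $\phi: M \to \mathbb{R}$, unique up to an additive constant, with $\varphi = \phi\circ f - \phi$, i.e.
\begin{equation}\label{cohom}
\log\|Df(x)|_{E^u_f(x)}\| = \lambda^u + \phi(f(x)) - \phi(x).
\end{equation}

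Next I would use $\phi$ to rescale the Riemannian length on unstable leaves. For a rectifiable arc $\gamma \subset W^u_f(x)$ set
\begin{equation}\label{newmetric}
\ell^u(\gamma) = \int_\gamma e^{-\phi(y)}\, d\ell_{\mathrm{Riem}}(y),
\end{equation}
and let $d^u(a,b)$ be the infimum of $\ell^u$ over arcs in the leaf joining $a$ to $b$ (since the leaves are one-dimensional, this is just $\ell^u$ of the segment between $a$ and $b$). The map $f$ sends $W^u_f(x)$ diffeomorphically onto $W^u_f(f(x))$ (it is a local diffeomorphism and carries unstable leaves to unstable leaves), and the change-of-variables formula combined with \eqref{cohom} gives, for any arc $\gamma$,
\[
\ell^u(f(\gamma)) = \int_\gamma e^{-\phi(f(y))}\,\|Df(y)|_{E^u_f}\|\, d\ell_{\mathrm{Riem}}(y) = \int_\gamma e^{-\phi(y) + \lambda^u}\, d\ell_{\mathrm{Riem}}(y) = e^{\lambda^u}\,\ell^u(\gamma).
\]
Applying this with $\gamma$ the segment from $a$ to $b$, and noting $f(\gamma)$ is the segment from $f(a)$ to $f(b)$, yields exactly $d^u(f(a),f(b)) = e^{\lambda^u} d^u(a,b)$.

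The main point requiring care is the regularity and well-definedness of the density $e^{-\phi}$: $\phi$ is only $\alpha$-Hölder a priori, so $\ell^u$ and $d^u$ are defined by an honest integral (which is fine, the integrand is continuous) rather than via a smooth metric tensor; this suffices for the stated functional equation. I would also check that $d^u$ is genuinely a metric on each leaf — symmetry and the triangle inequality are immediate from \eqref{newmetric}, and positivity follows since $e^{-\phi}$ is bounded below by a positive constant by compactness of $M$, so $d^u$ is comparable to Riemannian distance on short arcs and in particular $d^u(a,b) = 0 \iff a = b$. One should note that $\phi$, hence $d^u$, depends on the choice of additive constant in Livsic, but this only scales $d^u$ by a global constant and does not affect the conformality relation; if desired one normalizes, e.g., by fixing $\phi$ at a chosen point. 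I do not anticipate a serious obstacle here — the only subtlety is keeping track of the fact that "special" is what makes $\varphi$ a well-defined function on $M$ (not just on $M^f$), which is where the hypothesis of Theorem \ref{teo1} enters.
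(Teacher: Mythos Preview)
Your proof is correct and follows essentially the same approach as the paper: apply Livsic's Theorem to $\log\|Df|_{E^u_f}\| - \lambda^u$ to obtain a H\"older transfer function $\phi$, then define $d^u$ by integrating the conformal density $e^{-\phi}$ along unstable leaves and verify the scaling relation by change of variables. One small slip: $\varphi$ is only H\"older (not $C^{1+\alpha}$) since $E^u_f$ is only H\"older continuous, but this is exactly what the paper uses and is all that is needed.
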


\begin{proof}
Denote by $\lambda^u$ the common value of the Lyapunov exponents of periodic points of $f$ and $A$  in the directions $E^{u}_f$ and $E^{u}_A,$  respectively. Let us to denote on $\mathbb{T}^2,$ the $f-$invariant foliations $\mathcal{F}^{\ast}_f$ tangent to $E^{\ast}_f, \ast \in \{s,u\}.$

We see that $\log (||Df(x)|E^{u}_f(x)||) -\lambda^u$ has zero average over every periodic
orbit.

Since $f$ is a $C^{1+ \alpha}-$Anosov diffeomorphism, the map $x \mapsto \log (||Df(x)|E^{u}_f(x)||) $  is uniform $C^{\varepsilon}$ on $\mathbb{T}^2,$ for some $\varepsilon > 0.$ Hence, by Livsic's theorem \cite{Livsic72,Bo74}, we can find a $C^{\varepsilon}-$function $\phi^u$ such that,
$\phi^u: \mathbb{T}^2 \rightarrow \mathbb{R}$ such that
\begin{equation}  \label{cohomology}
\log (||Df(x)|E^{u}_f(x)||) - \lambda^u =    \phi^u(f(x)) - \phi^u(x).
\end{equation}

Equivalently
\begin{equation} \label{conformal}
e^{\phi^u(x)}||Df(x)|E^{u}_f(x)|| e^{-\phi^u(f(x))} = e^{\lambda^u}.
\end{equation}

We can interpret \eqref{conformal} as saying that, if we define a
metric, conformal to the standard metric in the torus by a factor
$e^{-\phi^u},$ then for a convenient metric $f$ expands on $W^{u}_i-$leaves  by exactly
$e^{\lambda^u}.$

In fact, fix an orientation on $W^{u}_f(x)$ and consider $a \geq b$ on $W^{u}_f(x),$ consider the metric
$$d^u(a, b) = \int_a^b e^{-\phi^u(x)}dx, $$ where $dx$ denotes the infinitesimal size on $W^{u}_f(x).$ With this

$$ d^u(f(a), f(b)) = \int_{f(a)}^{f(b)} e^{-\phi^u(y)}dy = \int_a^b e^{-\phi^u(f(x))}||Df(x)|E^{u}_f(x)|| dx =$$ $$= e^{\lambda^u} \int_a^b e^{-\phi^u(x)}dx = e^{\lambda^u} d^u(a,b).$$

\end{proof}

Also, we need the following proposition.

\begin{proposition}[Proposition 8.2.2 of \cite{AH}]\label{propAH} Let $L : \mathbb{R}^n \rightarrow \mathbb{R}^n$ be a hyperbolic linear automorphism
and let $T : \mathbb{R}^n \rightarrow \mathbb{R}^n$ be a homeomorphism. If $\bar{d}( L, T)$ is finite, then there is a
unique map $\phi : \mathbb{R}^n \rightarrow \mathbb{R}^n$ such that
\begin{enumerate}
\item $L \circ \phi = \phi \circ T,$
\item $\bar{d}(\phi, id_{\mathbb{R}^n})$ is finite.

\hspace{-1.7cm}Furthermore, for $K > 0$ there is a constant $\delta_K > 0$ such that if $\bar{d}(L,T) < K,$

\hspace{-1.7cm}then the above map $\phi$ has the following properties :

\item $\bar{d}(\phi, id_{\mathbb{R}^n}) < \delta_K,$
\item  $\phi$ is a continuous surjection,
\item  $\phi$ is uniformly continuous under $\bar{d}$ if so is T.
\end{enumerate}
\end{proposition}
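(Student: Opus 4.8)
The plan is to recast the equation $L\circ\phi=\phi\circ T$ as a fixed point problem for a uniform contraction on a complete metric space of bounded maps, and to obtain surjectivity separately by a topological degree argument; this is the classical structural stability scheme, adapted to the present (merely ``bounded distance'') hypothesis.

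First I would replace the Euclidean norm by an adapted norm on $\mathbb{R}^n$ so that, with respect to the hyperbolic splitting $\mathbb{R}^n=E^s\oplus E^u$ of $L$, one has $\lambda_s:=\|L|_{E^s}\|<1$ and $\lambda_u:=\|L^{-1}|_{E^u}\|<1$, taking the norm of a vector to be the maximum of the norms of its two components. Writing the unknown as $\phi=\mathrm{id}+u$, the relation $L\circ\phi=\phi\circ T$ is equivalent to $Lu-u\circ T=g$ with $g:=T-L$, and $g$ is bounded precisely because $\bar{d}(L,T)<\infty$; since $T$ is a homeomorphism one also has $\bar{d}(T^{-1},L^{-1})\le\|L^{-1}\|\,\bar{d}(L,T)<\infty$, so backward iterates of $T$ are controlled as well. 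Projecting the equation onto $E^s$ and $E^u$ and, in each block, isolating $u$ in the time direction in which the corresponding linear part contracts, I arrive at the fixed point equation $u=\Gamma u$, where
\[
(\Gamma u)_s(x)=L_s\,u_s(T^{-1}x)-g_s(T^{-1}x),\qquad (\Gamma u)_u(x)=L_u^{-1}u_u(Tx)+L_u^{-1}g_u(x).
\]

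Next I would check that $\Gamma$ is a well-defined self-map of the complete space $C_b(\mathbb{R}^n,\mathbb{R}^n)$ of bounded continuous maps with the sup distance (this uses continuity of $T$ and $T^{-1}$ and boundedness of $g$), and that $\|\Gamma u-\Gamma v\|_\infty\le\max(\lambda_s,\lambda_u)\,\|u-v\|_\infty$, so $\Gamma$ is a contraction. The Banach fixed point theorem then produces a unique bounded continuous $u$, hence a map $\phi=\mathrm{id}+u$ satisfying items (1) and (2); uniqueness among maps at finite distance from the identity follows because any such map yields, through the same algebra, a fixed point of $\Gamma$. Iterating $\|u\|_\infty\le\|\Gamma 0\|_\infty+\max(\lambda_s,\lambda_u)\,\|u\|_\infty$ together with $\|\Gamma 0\|_\infty\le C\,\bar{d}(L,T)$ gives $\bar{d}(\phi,\mathrm{id})\le\delta_K:=CK/(1-\max(\lambda_s,\lambda_u))$ whenever $\bar{d}(L,T)<K$, which is item (3). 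For item (5) I would rerun the fixed point argument on the closed subspace of bounded uniformly continuous maps, which $\Gamma$ preserves and still contracts once $T$, hence each of its iterates (with the usual tail-versus-weight bookkeeping), is uniformly continuous; in the applications, where $T$ lifts a torus homeomorphism, $T^{-1}$ is automatically uniformly continuous, which is exactly what the stable block $(\Gamma u)_s$ needs.

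The remaining item (4), surjectivity, is the one step that is genuinely not a contraction argument, and I expect it to be the main point requiring care. Here I would use that $\phi=\mathrm{id}+u$ with $m:=\sup_x\|u(x)\|<\infty$: given $y\in\mathbb{R}^n$ and $R>m+\|y\|$, the homotopy $H(x,t)=x+t(u(x)-y)$, $t\in[0,1]$, never vanishes on the sphere $\{\|x\|=R\}$ because $\langle H(x,t),x\rangle\ge R^2-(m+\|y\|)R>0$; hence the Brouwer degree of $x\mapsto\phi(x)-y$ on the ball of radius $R$ equals that of the identity, namely $1$, so $\phi(x)=y$ has a solution in that ball. Together with the estimates above this settles all five items; the only real subtleties are the degree-theoretic argument for surjectivity and, for item (5), keeping the modulus of continuity uniform along the exponentially weighted tail of forward and backward iterates of $T$.
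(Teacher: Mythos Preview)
The paper does not give its own proof of this proposition: it is quoted verbatim as Proposition~8.2.2 of \cite{AH} and used as a black box in the proof of Theorem~\ref{teo1}. So there is nothing in the paper to compare your argument against.

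That said, your proposal is the standard structural-stability scheme and is essentially the proof one finds in \cite{AH}. Writing $\phi=\mathrm{id}+u$, splitting along $E^s\oplus E^u$, and solving each block by iterating in the contracting time direction is exactly the classical argument; your contraction estimate and the bound $\delta_K=CK/(1-\max(\lambda_s,\lambda_u))$ are correct, and the degree/homotopy argument for surjectivity is the right way to get item~(4). One genuine caveat you already flag: your formula $(\Gamma u)_s(x)=L_s u_s(T^{-1}x)-g_s(T^{-1}x)$ makes the stable block depend on $T^{-1}$, so to preserve the subspace of bounded \emph{uniformly} continuous maps you need $T^{-1}$ uniformly continuous, not just $T$. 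The proposition as stated only hypothesizes uniform continuity of $T$; in \cite{AH} this is handled because in their setting (and in the paper's application, where $T$ is the lift of a torus homeomorphism commuting with deck transformations) uniform continuity of $T$ automatically entails that of $T^{-1}$. You note this, which is the honest thing to do; just be aware that for a completely general homeomorphism $T$ of $\mathbb{R}^n$ item~(5) as literally stated would need that extra hypothesis.
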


\begin{proof}

We go to prove the differentiability of the conjugacy between $f$ and $L,$ by using the conformal metrics on each one-dimensional invariant foliation of $f.$

Let $h: \mathbb{T}^2 \rightarrow \mathbb{T}^2$ be the conjugacy between $f$ and $A,$ such that $$h \circ A = f \circ h.$$

We first observe that, since $h$ sends $W^u_A$ leaves in $W^{u}_f,$ leaves then $h$ induces naturally a conjugacy $\mathcal{H}: \mathbb{T}^2/\mathcal{F}^{u}_A \rightarrow \mathbb{T}^2/\mathcal{F}^{u}_f.$

Up to change $(A, f)$ by $(A^2, f^2)$ we can suppose  $A$ and $f$ preserve the orientations established.

 Using this orientation, choose points $a_j , j \in \mathbb{Z}$ such that $a_{j} < a_{j+1}$ and for simplicity suppose that $|a_j - a_{j+1}| = 1,$ where $|u - v|$ is the Euclidean distance induced on $W.$ In fact we are seeing $W$ as a real line. Let $b_j = h(a_j), j \in \mathbb{Z}.$ For each $j$ we choose a function $\phi^u_{j}$ such that $d^u$ is such that $d^u(b_j, b_{j+1}) = 1.$ To simplify the writing,  we denote by $[p, q]$ a segment connecting points $p$ and $q$ on a leaf of type $W^{u}_A$ and $W^{u}_f.$ The same notation we will use for leaves lifted on $\mathbb{R}^2.$

Let us to define a map $\tilde{h}: [a_j, a_{j+1}] \rightarrow [b_j, b_{j+1}], $ using $\phi^u_{j}$ and the corresponding $d^u$ such that $\tilde{h}(\theta)$  is the unique point $p$ in $[b_j, b_{j+1}]$ such that $d^u(b_j , p ) = |a_j - \theta|.$  Also, for the given $j,$ using $\phi^u_{j}$ and the corresponding $d^u$ we define $\tilde{h}: [A^n (a_j), A^n(a_{j+1})] \rightarrow [f^n(b_j), f^n(b_{j+1})] $ following the same strategy before, for each $n \in \mathbb{Z}.$ By construction, $\tilde{h}$ and $h$ coincide on the extremes of intervals, as defined. This construction is such that $\tilde{h} \circ A = f \circ \tilde{h}.$ In fact, consider $\theta \in [a_0, a_1]$ such that $|a - \theta | = \alpha.$ By definition $d^u (\tilde{h}(a_0), \tilde{h}(\theta)) = \alpha.$ Taking the first iterated $|A(a_0) - A(\theta)| = e^{\lambda^u}\alpha$ and $d^u(f(\tilde{h}(a_0)) , f(\tilde{h}(\theta))) = e^{\lambda^u} d^u (\tilde{h}(a_0), \tilde{h}(\theta)) = e^{\lambda^u} \alpha.$ By definition $f (\tilde{h} (\theta)) = \tilde{h}(A(\theta)).$ The analogous construction can be done in universal cover level using lifts $\bar{A}$ and $\bar{f}.$

We can describe $\tilde{h}$ as a solution of a specific ordinary differential equation. In fact, given a leaf $W = W^{u}_A,$  $\tilde{h}:[a_0, a_1] \rightarrow [b_0, b_1]$ is defined by
\begin{equation}\label{ODE}
z' = e^{\phi^u_{0}(z)}, z(a_0) = b_0.
\end{equation}

In fact, let $z:[a_0,a_1] \rightarrow [b_0, b_1]$ be a solution of the differential equation $(\ref{ODE}).$ Let $a_0 \leq \theta \leq a_1,$ we have
$z'(t)e^{-\phi^u_{0}(z(t))} = 1,  $ for any $t \in [a_0, a_1],$ so

$$ \theta - a_0 = \int_{a_0}^{\theta} e^{-\phi^u_{0}(z(t))} z'(t) dt =  \int_{z(a_0)}^{z(\theta)} e^{-\phi^u_{0}(s)} ds  = d^u(z(a_0),z(\theta) ) = d^u(b_0,z(\theta) ) ,   $$
here $ds$ denote  the infinitesimal length arc of $W^{u}_f(b_0),$  so $z(\theta) = \tilde{h}(\theta),$ by definition of $\tilde{h}.$ In consequence, differential equations of kind $(\ref{ODE})$ have unique solution. In particular $\tilde{h}$ is at least $C^{1 + \alpha},$ for some $\alpha > 0,$ on each interval, since the functions $\phi$ is at least Lipschitz, given by Livsic's Theorem.

We will use these ideas in the universal cover. Consider $\bar{W}$ a lift of an unstable leaf $W,$ and $\bar{H}$ the lift of $h.$ Consider on $\bar{W}$ a the collection of points $\{ u_i \}_{i \in \mathbb{Z}}$ corresponding the intersection of $\bar{W}$ with the boundary of fundamental domains $[0,1]^2 + n, n \in \mathbb{Z}^2.$ We consider $\{z_i\}$ the collections of pre-images of the points $x,$ such that $\bar{A}(\bar{W})$ crosses fundamental domains at $x.$ To simplify, since $\bar{W}$ is ordered (induced by the order in $W$) we call $\{a_i\}$ the union of both collection, and $b_i = H(a_i).$ Note that the collection $\{b_i\}$ related to fundamental domains $H([0,1]^2 + n)$ have analogous properties described for  $\{a_i\}.$ So we define $\tilde{H}: \bar{W} \rightarrow H(\bar{W}) = \bar{f}(\bar{W}), $ such that $\tilde{H}(a_i) = b_i,$ and restricted to each interval $[a_i , a_{i+1}],$ the map $\tilde{H}$ is defined using in the universal cover level the metrics $d^u,$ making $\tilde{H}$ an isometry between $([a_i, a_{i+1}], |\cdot| )$  and $([b_i, b_{i+1}], d^u).$

Note that, if $\gamma_i$ is the connected component of $\bar{W}$ inside a fundamental domain $D_i$ of kind $[0,1]^2 + n, $ by construction $\tilde{H}(\gamma_i) = H(\gamma_i) = \delta_i,$ where $\delta_i$ is the connected component of $H(\bar{W})$ inside a fundamental domain $H(D_i).$

The reason to choose points $z_i$ is that $\bar{A}(z_i)$ are exactly the points such that $\bar{A}(\bar{W})$ crosses fundamental domains, it allows one more time define $\tilde{H}$ preserving connected components inside fundamental domains. So over $\bar{A}(\bar{W})$ we consider points the union of points $\bar{A}(a_i)$ and the analogous  $z_i$ for $\bar{A}(\bar{W}),$ so we can proceed inductively the construction of $\tilde{H}$ on each $\bar{A}^k(\bar{W}), k \in \mathbb{Z},$ since $\bar{A}$ is invertible. We repeat this process for all orbits of unstable leaves. 

%

Since we are deal with a foliation and  $\tilde{H}$ is bijective restricted to each leaf, we get a bijection $\tilde{H}: \mathbb{R}^2  \rightarrow \mathbb{R}^2,$ such that, as above $\tilde{H}(\gamma_i) = H(\gamma_i)$ and $\tilde{H} \circ \bar{A} = \bar{f} \circ \tilde{H}.$ So there is $K > 0$ such that

\begin{equation} \label{distance2}
x \in \mathbb{R}^2 \Rightarrow || H(x) - \tilde{H}(x)|| \leq K .
\end{equation}

Finally, since $H$ is the lift of $h,$ we get $|| H(x) - x|| \leq R,$ for any $x \in \mathbb{R}^2$ and we conclude

\begin{equation} \label{distance3}
x \in \mathbb{R}^2 \Rightarrow ||  \tilde{H}(x) - x|| \leq R + K .
\end{equation}

By Proposition $\ref{propAH}$ we conclude $H = \tilde{H},$ and then $H$ is at least $C^{1+\varepsilon}$ on unstable leaves unless countable points per unstable leaves. We could to do it again using fundamental domains $([0,1]^2 + \vec{\varepsilon}) + n, n \in \mathbb{Z}^2,$ where $\vec{\varepsilon}$ is a small vector in $\mathbb{R}^2,$ with irrational coordinates. The conclusion for these constructions is that $H$ is differentiable up to countable points on unstable leaves corresponding to crosses with fundamental domains, but this set is disjoint to the set in the before situation, so $H$ is $C^{1+\varepsilon}$ on unstable leaves.

To finalize the argument that $h$ is at least $C^1$ we evoke again the Journ\'{e}'s Lemma since the argument can be applied for stable leaves. Finally $h$ is $C^1,$ we conclude that $h$ is smooth by applying Theorem \ref{teo3} to $f$ and $A.$

\end{proof}

\begin{remark}
The technique used to prove the regularity of the conjugacy in Theorem \ref{teo1} can be used in every context such we have the coincidence of periodic data along the corresponding one-dimensional foliations of $f$ and $A$ and $h$ matches leaves of such foliations.
\end{remark}


\section{Proof of Theorem \ref{teo2}}
For this section, we need the specification to prove the following lemma.

\begin{lemma}\label{spc} Consider $f: \mathbb{T}^2 \rightarrow  \mathbb{T}^2 $ an Anosov endomorphism such that every point is regular. So for any point $p,q  \in Per(f)$ holds
$$\lambda^{\ast}_f(p) = \lambda^{\ast}_f(q), \ast \in \{s, u\}.$$
\end{lemma}

We present the proof later.

As in equation \eqref{conformal},  $$||Df^n(x)|E^s_f(x)|| = e^{n\lambda^s} e^{\phi_s(f^n(x)) - \phi_s(x)},$$ for some $\phi_s: \mathbb{T}^2 \rightarrow \mathbb{R}$ a Lipschtiz function. So  $\lambda^s_f(x) = \lambda^s,$ for any $x \in \mathbb{T}^2.$ Since $\phi_s$ is continuous, the convergence $\frac{1}{n}\log(||Df^n(x)|E^s_f(x)||) \rightarrow \lambda^s$ is uniform on $\mathbb{T}^2.$ Analogously $\lambda^u_f(x) = \lambda^u,$ for any $x \in \mathbb{T}^2,$ with uniform convergence. The same idea holds for $Jf =  |\det(Df)|,$ meaning that there is a Lipschitz function $\phi: \mathbb{T}^2 \rightarrow \mathbb{R}, $ such that
\begin{equation}\label{jac}
Jf = e^{c} e^{\phi(f(x)) - \phi(x)}.
\end{equation}
By Oseledec's Theorem $c = \lambda^u + \lambda^s$ in $\eqref{jac}.$ Of course, the convergence $\frac{1}{n}\log(Jf(x)) \rightarrow \lambda^u +\lambda^s$ is uniform.

%

\begin{lemma} Consider $f: \mathbb{T}^2 \rightarrow  \mathbb{T}^2$ an Anosov endomorphism such that every point is regular. Then $f$ preserves an absolutely continuous measure.
\end{lemma}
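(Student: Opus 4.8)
The plan is to produce the invariant density explicitly, in the spirit of the computation that works for linear $A$ (where $JA\equiv k$ and Lebesgue is invariant). Write $k=\deg f\ge 2$ and recall from \eqref{jac} that $Jf(x)=e^{c}e^{\phi(f(x))-\phi(x)}$ with $c=\lambda^{u}+\lambda^{s}$ and $\phi$ Lipschitz; put $w=e^{\phi}$, so $w$ is continuous and takes values in a fixed interval $[m_{0},M]$ with $m_{0}>0$, and $Jf(x)=e^{c}w(f(x))/w(x)$. The heart of the matter is the identification $c=\log k$; granting it, the candidate density is $\rho=e^{-\phi}=1/w$. Denoting by $\mathcal{L}$ the transfer (Perron--Frobenius) operator of $f$ with respect to Lebesgue $m$, $\mathcal{L}\psi(x)=\sum_{f(y)=x}\psi(y)/Jf(y)$, the telescoping identity $Jf(y)=e^{c}w(x)/w(y)$ for $f(y)=x$ gives
\[
\mathcal{L}\rho(x)=\sum_{f(y)=x}\frac{1}{w(y)}\cdot\frac{w(y)}{e^{c}w(x)}=\frac{\#f^{-1}(x)}{e^{c}w(x)}=\frac{k}{e^{c}w(x)}=\frac{1}{w(x)}=\rho(x),
\]
where we used that an Anosov endomorphism is a local diffeomorphism, hence a degree-$k$ covering of $\mathbb{T}^{2}$, so $\#f^{-1}(x)=k$, and that $e^{c}=k$. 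Since $\mathcal{L}\rho=\rho$ is equivalent to $f$-invariance of $\rho\,dm$, the probability measure $\mu=Z^{-1}e^{-\phi}\,dm$, $Z=\int_{\mathbb{T}^{2}}e^{-\phi}\,dm$, is $f$-invariant and, having a positive continuous density, is equivalent to Lebesgue, which proves the lemma.

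Thus everything reduces to proving $c=\log k$. The plan here is to iterate the transfer operator. The density of $(f^{n})_{*}m$ with respect to $m$ is $\mathcal{L}^{n}\mathbf{1}$, and since $(f^{n})_{*}m$ is a probability measure, $\int_{\mathbb{T}^{2}}\mathcal{L}^{n}\mathbf{1}\,dm=1$ for all $n$. Now $Jf^{n}(y)=\prod_{i=0}^{n-1}Jf(f^{i}y)=e^{nc}w(f^{n}y)/w(y)$, so for $f^{n}(y)=x$ we get $Jf^{n}(y)=e^{nc}w(x)/w(y)$ and hence
\[
\mathcal{L}^{n}\mathbf{1}(x)=\sum_{f^{n}(y)=x}\frac{1}{Jf^{n}(y)}=\frac{1}{e^{nc}w(x)}\sum_{f^{n}(y)=x}w(y).
\]
Since $\#f^{-n}(x)=k^{n}$ and $m_{0}\le w\le M$, this yields $\tfrac{m_{0}}{M}k^{n}e^{-nc}\le\mathcal{L}^{n}\mathbf{1}(x)\le\tfrac{M}{m_{0}}k^{n}e^{-nc}$ pointwise; integrating against $m$ gives $\tfrac{m_{0}}{M}k^{n}e^{-nc}\le 1\le\tfrac{M}{m_{0}}k^{n}e^{-nc}$, and taking $\tfrac1n\log(\cdot)$ and letting $n\to\infty$ squeezes $c=\log k$. (Equivalently, one may obtain $\log k\le\lambda^{u}+\lambda^{s}=c$ by combining Ruelle's inequality $h_{\mu^{-}}(f)\le\lambda^{u}$ with the inverse Pesin formula $h_{\mu^{-}}(f)=\log k-\lambda^{s}$ for the inverse S.R.B. measure $\mu^{-}$ of Theorem 3 of \cite{Mh1}, and the reverse inequality $c\le\log k$ from Jensen's inequality applied to $\int_{\mathbb{T}^{2}}Jf^{n}\,dm=k^{n}$; but the transfer-operator squeeze above is self-contained.)

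The only step that is not pure formalism is the equality $c=\log k$ — this is where the topological degree enters, and where an actual argument (rather than change-of-variables bookkeeping) is needed. Everything else is the standard coarea calculus for the transfer operator of a $C^{1}$ local diffeomorphism, which is exact precisely because $f$ is a covering: $\#f^{-n}(x)\equiv k^{n}$ and the product defining $Jf^{n}$ telescopes cleanly against the cohomological equation for $\phi$. I emphasize that the hypothesis that every point is regular is used twice implicitly: it is what allows \eqref{jac} to be written at all (constant periodic data via Lemma \ref{spc}, then Livsic), and it guarantees $\phi$ is continuous, hence bounded, which is exactly what makes the $m_{0},M$ bounds above work. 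Finally, with $\mu\ll m$ in hand one may apply Pesin's formula to $(f,\mu)$ and proceed to compare periodic data with the linearization, feeding into Theorem \ref{teo1}.
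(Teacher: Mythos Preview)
Your proof is correct and follows the same overall architecture as the paper: both identify $e^{-\phi}$ as the invariant density once the key identity $c=\lambda^{u}+\lambda^{s}=\log k$ is established, and both verify invariance by the same change-of-variables computation (the paper does it on small balls, you do it via $\mathcal{L}\rho=\rho$, which is the same thing). The genuine difference is in how $c=\log k$ is obtained. The paper invokes the S.R.B.\ theory of \cite{Mh}: since $\log Jf$ is cohomologous to a constant, the forward and inverse S.R.B.\ measures have equal entropy, and equating the Pesin formula $h_{\mu^{+}}=\lambda^{u}$ with the inverse Pesin formula $h_{\mu^{-}}=\log k-\lambda^{s}$ yields $\lambda^{u}+\lambda^{s}=\log k$. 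Your transfer-operator squeeze is more elementary and entirely self-contained: it uses only that $f$ is a $k$-to-$1$ covering and that $\phi$ is bounded, avoiding any appeal to S.R.B.\ existence or entropy formulas. What the paper's route buys is a conceptual link to the ambient S.R.B.\ machinery already set up in the preliminaries (and it identifies $\mu$ with $\mu^{+}=\mu^{-}$ along the way, which is used immediately afterward); what your route buys is independence from that machinery for this particular lemma. Your parenthetical alternative is essentially the paper's argument.
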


\begin{proof}
%

Since $Jf = |\det Df|$ is cohomologous to constant, by \cite{Mh} we obtain $h_{\mu^{+}_f}(f) =  h_{\mu^{-}_f}(f),$ we conclude that  $\lambda^u + \lambda^s = \log(k).$ The formula $\eqref{jac}$ can be rewritten as

\begin{equation}\label{density}
\log(J f) - \log(k) =  \phi( f(x)) - \phi(x).
\end{equation}

It leads us to  $$Jf(x) e^{-\phi(f(x))} = ke^{-\phi(x)}. $$
Define

Let $B$ be a small open ball and $B_1, B_2, \ldots, B_k$ its mutually disjoint pre images, $f(B_i) = B.$

Define the measure $d\nu = e^{-\phi(x)}dm,$
$$\nu(B) = \nu(f(B_i)) = \int_{f(B_i)} e^{-\phi(y)}dm  =  \int_{B_i} Jf(x) e^{-\phi(f(x))}dm = \int_{B_i} k e^{-\phi(x)}dm = k\nu(B_i) $$
$$\nu(B_i) = \frac{1}{k} \nu(B)$$

$$ \nu(B)  = \sum_{i = 1}^k\nu(B_i) = \nu(f^{-1}(B) ).$$
Define
$\mu(X) = \frac{\nu(X)}{\nu(\mathbb{T}^2)},$ to obtain an $f-$invariant measure absolutely continuous w.r.t. $m.$
\end{proof}

Let us end the proof of Theorem \ref{teo2}. We know that $\lambda^u_f(x) = \lambda^u,$ for any $x \in \mathbb{T}^2.$ Using the Ruelle's inequality we obtain $$h_{\nu}(f) \leq \lambda^u,$$ for any $\nu$ an $f-$invariant, Borelian, probability measure. By variational principle $$h_{top}(f) \leq \lambda^u.$$

By the version of the Pesin Theorem for endomorphism,
$$h_{\nu}(f) = \log(\lambda^u).$$ So $\nu = \mu^{+}_f = \mu^{-}_f$ the maximal entropy measure of $f. $
Since $f$ and $A$ are conjugated, they are same topological entropy, then $\lambda^u = \lambda^u_A$ and $\lambda^s = \lambda^s_A,$ using Theorem \ref{teo1} we conclude the proof.

\section{Specification Property and Proof of Lemma \ref{spc}}

Let us explain the specification property.

\begin{definition}[Specification Property] Let $f: M \rightarrow M$ be a diffeomorphism. We say that $f$ has the specification property if given $\varepsilon > 0$ there is a relaxation time $N\in \mathbb{N}$  such that every $N-$spaced collection of orbit segments is $\varepsilon-$shadowed by an actual orbit. More precisely, for points $x_1, x_2, \ldots, x_n$ and lengths $k_1, \ldots, k_n \in \mathbb{N}$ one can find times $a_1, \ldots, a_n$ such that $a_{i+1} \leq a_i + N$ and a point $x$ such that $d(f^{a_i + j}(x), f^{j}(x_i) ) < \varepsilon $ whenever $0 \leq j \leq k_i.$ Moreover, one can choose $x$ a periodic point with period no more than $a_n + k_n + N.$
\end{definition}

\begin{theorem}[Bowen, \cite{Bo74}] Every transitive Anosov diffeomorphism has the specification property.
\end{theorem}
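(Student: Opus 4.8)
\emph{Proof strategy.} The plan is to deduce the specification property from three classical features of a transitive Anosov diffeomorphism $f\colon M\to M$: the pseudo--orbit tracing (shadowing) property, expansiveness, and topological mixing. First I would upgrade transitivity to topological mixing. By the spectral decomposition of an Anosov diffeomorphism, the non--wandering set $\Omega(f)$ is a finite disjoint union of basic pieces that are clopen in $\Omega(f)$ and cyclically permuted by $f$, and on each of which a suitable power of $f$ is topologically mixing. Transitivity gives $\Omega(f)=M$, and since $M$ is connected there is a single piece and the cyclic permutation is trivial; hence $f$ itself is topologically mixing. Combining this with compactness of $M$ yields a \emph{uniform relaxation time}: fix $\varepsilon>0$, take $\delta>0$ from the shadowing property with tracing error $\varepsilon/3$, cover $M$ by finitely many balls of radius $\delta/2$, apply topological mixing to each ordered pair of these balls and let $N$ be the largest of the resulting times. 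Then for all $a,b\in M$ there is a point $u=u(a,b)$ with $d(u,a)<\delta$ and $d\bigl(f^{N}(u),b\bigr)<\delta$.

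Next, given orbit segments $x_1,\dots,x_n$ with lengths $k_1,\dots,k_n$, I would assemble a single $\delta$--pseudo--orbit by concatenation. Start with the genuine piece $x_1,f(x_1),\dots,f^{k_1}(x_1)$; append a connecting block $u_1,f(u_1),\dots,f^{N-1}(u_1)$ of length $N$, where $u_1=u\bigl(f^{k_1+1}(x_1),x_2\bigr)$; then append the genuine piece $x_2,\dots,f^{k_2}(x_2)$; then a connecting block built from $u_2=u\bigl(f^{k_2+1}(x_2),x_3\bigr)$; and so on through $x_n$. By the choice of the $u_i$ all consecutive jumps of this sequence are $<\delta$, so the shadowing property provides a genuine orbit $\bigl(f^{m}(x)\bigr)_{m\ge 0}$ that $(\varepsilon/3)$--shadows it. Setting $a_1=0$ and $a_{i+1}=a_i+k_i+N$ gives $d\bigl(f^{a_i+j}(x),f^{j}(x_i)\bigr)<\varepsilon$ for $0\le j\le k_i$, which is the required specification estimate with relaxation time $N$.

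To make the shadowing point periodic I would close up the program: append one last connecting block of length $N$ built from $u_{n}=u\bigl(f^{k_n+1}(x_n),x_1\bigr)$, so that the $\delta$--pseudo--orbit becomes periodic of period $P=\sum_{i=1}^{n}(k_i+N)\le a_n+k_n+N$, and extend it periodically to all of $\mathbb{Z}$. A genuine bi--infinite orbit $x$ that $(\varepsilon/3)$--shadows it exists; since the pseudo--orbit is $P$--periodic, $f^{P}(x)$ $(\varepsilon/3)$--shadows the same sequence, whence $d\bigl(f^{m}(x),f^{m+P}(x)\bigr)<2\varepsilon/3$ for every $m$. Taking $\varepsilon$ below the expansiveness constant of $f$ forces $f^{P}(x)=x$, so $x$ is periodic with period at most $a_n+k_n+N$.

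The constant bookkeeping ($\varepsilon/3$ versus $\delta$) and the finite sub--cover step are routine. What genuinely uses the Anosov hypothesis, and is the heart of the matter, is that shadowing and expansiveness are available simultaneously, together with the passage from transitivity to topological mixing; once those are granted, the gluing and closing construction above is purely formal. I expect the only delicate points to be the uniformity of the relaxation time $N$ over all pairs $(a,b)$ and, for the periodicity clause, checking that the bi--infinite periodic extension of the program is indeed traced by a single orbit so that expansiveness can be applied.
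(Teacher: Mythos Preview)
The paper does not prove this statement: it is quoted as a classical result of Bowen with a citation to \cite{Bo74} and is used only as background for the specification tool needed in Lemma~\ref{spc}. So there is no ``paper's own proof'' to compare against.

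That said, your sketch is the standard correct argument. The three ingredients you isolate---topological mixing (obtained from transitivity via spectral decomposition and connectedness), the shadowing lemma, and expansiveness---are exactly what Bowen uses, and the gluing/closing construction you describe is the classical one. A couple of minor remarks: (i) your appeal to connectedness of $M$ to force a single mixing piece is the usual shortcut; without it one must argue a bit more carefully with the cyclic pieces, but the conclusion still holds for transitive basic sets; (ii) the step ``taking $\varepsilon$ below the expansiveness constant'' is legitimate because specification for small $\varepsilon$ trivially implies specification for larger $\varepsilon$ with the same $N$; (iii) there is a harmless off-by-one in your bookkeeping for $a_{i+1}$, matching a similar imprecision in the paper's own statement of the definition. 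None of these affect the validity of the argument.
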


Recently Moriyasu, Sakai, and Yamamoto in \cite{SY} proved among other things the following result.

\begin{proposition}[Corollary 1 of \cite{SY}] The set of $C^1$-regular maps of $M$ satisfying the $C^1$-stable specification
property is characterized as the set of transitive Anosov maps.
\end{proposition}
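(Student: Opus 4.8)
The plan is to establish the two inclusions separately, the second being the substantial one.

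\textbf{Transitive Anosov $\Rightarrow$ $C^{1}$-stable specification.} First I would use that $f$ is an Anosov endomorphism if and only if its lift $\overline f$ to the universal cover is an Anosov diffeomorphism; since Anosov diffeomorphisms form a $C^{1}$-open set and the lift depends $C^{1}$-continuously on $f$, being an Anosov endomorphism is a $C^{1}$-open condition. For the robustness of transitivity I would use that a $C^{1}$-small (indeed $C^{0}$-small) perturbation of an Anosov endomorphism is topologically conjugate to it at the level of the inverse limit $(M^{f},\tilde f)$ (\cite{PRZ}, \cite{BerRov}), so transitivity persists under such perturbations. It then remains to check that a transitive Anosov map has specification, and for this I would pass to the inverse limit: $\tilde f\colon M^{f}\to M^{f}$ is an expansive homeomorphism of a connected compact metric space having the shadowing property, and it is transitive (hence, by local product structure, topologically mixing) because $f$ is; by Bowen's argument \cite{Bo74} such a homeomorphism has the specification property. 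Finally I would push this down through the factor map $p\colon M^{f}\to M$, which is Lipschitz for $\bar d$ (indeed $\bar d(\tilde x,\tilde y)\ge d(x_{0},y_{0})$): an $N$-spaced family of $f$-orbit segments lifts, after choosing backward histories, to an $N$-spaced family of $\tilde f$-orbit segments, whose $\varepsilon$-shadowing periodic orbit projects to an $\varepsilon$-shadowing periodic orbit for $f$. Every step being stable under $C^{1}$-perturbation, $f$ has $C^{1}$-stable specification.

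\textbf{$C^{1}$-regular with $C^{1}$-stable specification $\Rightarrow$ transitive Anosov.} Specification forces topological mixing, hence transitivity, and --- using that the shadowing orbit may be taken periodic --- density of the periodic set; so only the Anosov property remains. The crucial point is that $C^{1}$-stable specification makes the periodic data of $f$ \emph{uniformly hyperbolic}, with a common stable-bundle dimension $k$ (possibly $k=0$, the expanding case). The obstruction it exploits is that a sink is incompatible with topological mixing: the basin of an attracting periodic orbit is forward-invariant and its forward images shrink into an arbitrarily small neighborhood of a finite orbit, which cannot meet a fixed open set disjoint from that orbit; hence no $C^{1}$-nearby regular map has a sink. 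I would then run the Franks--Ma\~{n}\'{e} perturbation scheme adapted to $C^{1}$ local diffeomorphisms (cf.\ \cite{AH}): if the derivative cocycle over the periodic orbits of $f$ failed to be uniformly hyperbolic, a finite sequence of $C^{1}$-small derivative perturbations supported near suitable periodic orbits --- kept inside the prescribed $C^{1}$-neighborhood and inside the class of regular maps --- would push the relevant return-map eigenvalues across the unit circle and, by a Liao--Ma\~{n}\'{e} selection argument, eventually produce a periodic point of index $0$, i.e.\ a sink, contradicting the previous remark. With uniformly hyperbolic periodic data of constant index $k$ and dense periodic points in hand, I would finally invoke Ma\~{n}\'{e}'s argument that a dense set of uniformly hyperbolic periodic orbits forces a dominated, uniformly hyperbolic splitting over the whole closure, carried out cocycle-wise over the inverse limit $(M^{f},\tilde f)$, where $\tilde f$ is a homeomorphism and the unstable bundle is genuinely well defined. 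This says exactly that $\tilde f$ is an Anosov diffeomorphism, i.e.\ that $f$ is an Anosov endomorphism; together with the transitivity already noted, this closes the argument.

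\textbf{Main obstacle.} The delicate part is the second inclusion, and within it the perturbative core: executing the Franks-type derivative perturbation while staying in the class of $C^{1}$ local diffeomorphisms and inside the given $C^{1}$-neighborhood, and steering the Liao--Ma\~{n}\'{e} selection so that it terminates in a sink rather than a harmless saddle; and then performing the Ma\~{n}\'{e} step that upgrades ``dense uniformly hyperbolic periodic orbits'' to ``uniformly hyperbolic everywhere'' at the level of $M^{f}$ --- a passage forced by the non-uniqueness of unstable directions of an endomorphism on $M$ itself. The remaining ingredients are routine transcriptions of the invertible theory to the regular-map setting (cf.\ \cite{AH}, \cite{Chung}).
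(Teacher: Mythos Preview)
The paper does not prove this statement at all: it is quoted verbatim as Corollary~1 of \cite{SY} (Moriyasu, Sakai, Yamamoto) and used as a black box to justify that the Anosov endomorphism $f$ in Theorem~\ref{teo2} has specification, which is then fed into the proof of Lemma~\ref{spc}. There is therefore nothing in the present paper to compare your proposal against.

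That said, your sketch is a reasonable outline of how results of this type are typically established, and it is broadly consistent with the strategy in \cite{SY}. The first inclusion is standard: openness of the Anosov condition together with inverse-limit stability and Bowen's specification argument is the expected route, and your transfer of specification from $(M^{f},\tilde f)$ to $(M,f)$ via the Lipschitz factor map $p$ is correct. For the second inclusion your plan---specification forces mixing and dense periodic orbits, $C^{1}$-stability rules out sinks in nearby maps, and a Franks--Ma\~{n}\'{e}/Liao perturbation argument then yields uniform hyperbolicity of periodic data, which extends to the closure---is the right shape. The genuine work you flag (keeping the perturbations inside the class of $C^{1}$ local diffeomorphisms, and carrying out the Ma\~{n}\'{e} step at the level of $M^{f}$ because unstable directions are orbit-dependent on $M$) is exactly where \cite{SY} puts its effort; your identification of the obstacles is accurate, though of course a full proof requires the careful selecting-lemma and dominated-splitting machinery that you only name. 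If your goal was to reproduce the paper's own argument, note simply that there is none here to reproduce.
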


So we can apply specification to sketch prove Lemma \ref{spc}.

\begin{proof}
Suppose that $p$ and
$q$ periodic points of $f$ such that $f^n(p) = p$ and $f^n(q) = q,$
where $n \geq 1$ is an integer number. Suppose that
$\lambda(p),\lambda(q)$ denote the Lyapunov exponents corresponding to
direction $E^u_f$ and $\lambda(p) < \lambda(q).$ Consider $\delta > 0$
such that $(1+\delta)^2\lambda(p) < (1-\delta^2)\lambda(q),$ and
$\varepsilon > 0$ such that if $d(x,y) < \varepsilon, $ then $1-\delta
< \frac{|D^uf(x)|}{|D^uf(y)|} < 1+ \delta.$ Let $N > 0$ be the
relaxation time, for the given $\varepsilon > 0,$ where $D^uf(x) =
Df(x)|E^u_f(x).$ For each $j \in \mathbb{N}$ we consider the orbit
segments $P_j = \{\theta_j, f^1(\theta_j), \ldots, f^{k_j - 1}
(\theta_j)\}, $ where $\theta_j = p, $ if $j$ is odd and $\theta_j =
q, $ if $j$ is even. We define inductively $k_j$ as follows.  First
$k_1 = n,$ $k_{j+1} = (k_1 + \ldots + k_j + jN)^2,$ for $j = 1, 2,
\ldots .$ Consider $O_j $ the concatenation of $P_1, \ldots, P_k .$
The length of the sequence $O_j$ is $k_1 + \ldots + k_j.$ By
specification property of $f,$ for any $j $ there is a point $z_{j}$ and a segment of orbit $\{z_{j}, f(z_{j}), \ldots,
f^{r_{j}}(z_{j})\},$ with $r_{j} \leq (k_1 + \ldots + k_{j-1} +
(j-1)N) + k_{j}$ and satisfying the specification property.  Observe that
$r_{j}$ is a natural number of the form $s_j + t_j^2,$ with $t_j =
(k_1 + \ldots + k_{j-1} + (j-1)N)\in \mathbb{N}$ and $0< s_j \leq
t_j. $

Let $x = z_j,$ for some $j.$ For the integer $s + t^2,$ with $s = s_j $ and $t = t_j$ as above,

$$\frac{1}{s + t^2} \log(|D^uf^{s + t^2}(x)|) = \frac{1}{s + t^2} \log( \prod_{i = 0}^{s-1}| D^uf(f^i(x))| )\cdot  \prod_{i = s}^{s+ t^2 -1} |D^uf(f^i(x))|) $$
$$ \approx \frac {s}{s + t^2} \log(K) + (1 \pm \delta ) \frac{t^2}{s + t^2} (\lambda(\theta_l)) + \frac{r}{s + t^2} \log(K), $$
where $r$ is the rest of the division of $s + t^2 $ by $n $ and $K = \max_{x \in \mathbb{T}^2} {|Df(x)|}. $

So, taking $j = 2n-1 \rightarrow +\infty,$ we get $\frac{1}{s + t^2} \log(|D^uf^{s + t^2}(z_j)|) \approx (1 \pm \delta)\lambda(p),$ analogously taking
$j = 2n \rightarrow +\infty,$ we obtain $\frac{1}{s + t^2} \log(|D^uf^{s + t^2}(z_j)|) \approx (1 \pm \delta)\lambda(q).$

Consider  if $j \geq n,$ and $z_n$ obtained by specification as above. There is an integer $0 < s = s'_j \leq t_j, $ such that for $t = t_j$ holds   $$\frac{1}{s + t^2} \log(|D^uf^{s + t^2}(z_n)|)\approx (1 \pm \delta) \lambda(\theta_j),$$ it is because $z_n$ accompanies $O_j,$ according to specification.

By compactness of $\mathbb{T}$ we can suppose that $z_n \rightarrow z.$ We claim that $z$ is not regular.

By continuity of $D^uf,$ if $j $ is odd, taking $z_n$ enough close to $z,$ with $n \geq j.$ Let $r_j$ be as above for $z_n,$  we obtain $\frac{1}{s + t^2} \log(|D^uf^{r_j} (z)|) \approx (1 \pm \delta)(1 \pm \delta)\lambda(p), $ where $s = s'_j$ and $t = t_j.$ Analogously if  $j $ is even, then  we obtain $\frac{1}{s + t^2} \log(|D^uf^{s + t^2} (z)|) \approx (1 \pm \delta)(1 \pm \delta)\lambda(q), $ since $\delta$ is small we conclude that $z$ is not regular.

\end{proof}

\section{Appendix}
In this appendix, we explore the structure of the proof of Theorem \ref{teo1} to obtain generalization in higher dimensions. For the next result, we define.

\begin{definition}Let $f: \mathbb{T}^d \rightarrow \mathbb{T}^d$ be an Anosov endomorphisms such that there is a $Df-$invariant splitting
$$ T_{x_k}\mathbb{T}^d = \bigoplus_{i = 1}^n E^i_f(x_k: \tilde{x}), k \in \mathbb{Z},$$
for any orbit $\tilde{x} = (x_k)_{k \in \mathbb{Z}}.$ We say that $f$ is $E^i_f-$special if for any orbits $\tilde{x} = (x_k)_{k \in \mathbb{Z}}$ and $\tilde{y} = (y_k)_{k \in \mathbb{Z}},$ such that $x_0 =  y_0,$ we have $E^i_f(x_0: \tilde{x})  =  E^i_f(y_0: \tilde{y}).$
\end{definition}

\begin{maintheorem}\label{teo4} Let $A: \mathbb{T}^d \rightarrow  \mathbb{T}^d, d \geq 3,$ be a linear Anosov endomorphisms, such that $\dim E^s_A \geq 1, \dim E^u_A \geq 1.$ Suppose that $A$ is irreducible over $\mathbb{Q}$ and it has simple real spectrum, such that $E^s_A = E^{s,A}_1 \oplus \ldots \oplus E^{s,A}_{k}$ and $E^u_A = E^{u,A}_{1} \oplus \ldots \oplus E^{u,A}_{n}.$ Consider  $f: \mathbb{T}^d \rightarrow  \mathbb{T}^d$ a smooth Anosov endomorphisms $C^1-$close to $A,$ such that $f$ is  $E^{s,f}_{i}$ and $E^{u,f}_{j}$ special, for $i = 1, \ldots, k$ and $j =  1, \ldots, n,$ the natural continuations of invariant sub bundles of $A.$ Suppose that each  leaf of the foliations $\mathcal{F}^{s,f}_{i}$ and $\mathcal{F}^{u,f}_{j}$ tangent to $E^{s,f}_{i}, i =1, \ldots, k$ and $E^{u,f}_{j}, j =1, \ldots, n$ is non compact. If for any periodic point of $f$ we have coincidence of Lyapunov exponents $\lambda^{s,f}_{i} = \lambda^{s,A}_{i}, i =1, \ldots, k$ and $\lambda^{u,f}_{j} = \lambda^{u,A}_{j}, i =1, \ldots, n,$ then $f$ and $A$ are $C^1-$conjugated.
\end{maintheorem}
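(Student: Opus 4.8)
The plan is to reduce the $d$-dimensional statement to a family of one-dimensional rigidity problems, one for each invariant sub-bundle, and then to invoke a de la Llave–type argument on each. First I would pass to the universal cover $\R^d$ and use the structural stability of $A$ on the inverse limit: since $f$ is $C^1$-close to $A$, there is a bi-Hölder conjugacy $h$ with $h\circ A = f\circ h$ (more precisely, a conjugacy between the lifts to the inverse limit space, which descends because $f$ is special in each direction). The hypotheses that $f$ is $E^{s,f}_i$- and $E^{u,f}_j$-special mean each sub-bundle is well-defined as a bundle over $\mathbb{T}^d$ (not just over the orbit space), so each integrates to a genuine foliation $\mathcal{F}^{s,f}_i$, $\mathcal{F}^{u,f}_j$ of $\mathbb{T}^d$; because $A$ has simple real spectrum and is irreducible over $\mathbb{Q}$, the corresponding linear foliations are minimal with non-compact leaves, and this property persists for $f$ by the non-compactness hypothesis. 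The conjugacy $h$ maps leaves of $\mathcal{F}^{\ast,A}_i$ to leaves of $\mathcal{F}^{\ast,f}_i$; this is the step where irreducibility is essential, since it forces $h$ to respect the flag structure (an invariant sub-bundle of $A$ cannot be "mixed up" by a conjugacy when the eigenvalues are all distinct in modulus and the lattice admits no invariant sub-tori).

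Next I would restrict attention to a single one-dimensional foliation $\mathcal{F}^{u,f}_j$. Along its leaves, $f$ acts as a one-dimensional expanding-type map, and the restriction of $h$ to a leaf conjugates the affine expansion $x\mapsto \mu_j x$ (where $\mu_j = e^{\lambda^{u,A}_j}$) with the nonlinear map $f|\mathcal{F}^{u,f}_j$. The equality of periodic Lyapunov exponents $\lambda^{u,f}_j = \lambda^{u,A}_j$ says exactly that the Birkhoff sums of $\log\|Df|E^{u,f}_j\|$ over periodic orbits agree with those of the constant $\lambda^{u,A}_j$. By the Livšic theorem (periodic orbit obstruction) applied to the expanding one-dimensional dynamics on each leaf — or rather to the induced map on the inverse limit, which is a subshift-of-finite-type-like system — the function $\log\|Df|E^{u,f}_j\| - \lambda^{u,A}_j$ is a coboundary with continuous (indeed, as smooth as the data allows) transfer function. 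This is the standard mechanism: matching periodic data plus Livšic gives that the Jacobian of $h$ along leaves is cohomologous to a constant, hence $h$ restricted to each leaf is a $C^1$ (in fact $C^{1+}$) diffeomorphism with controlled derivative. One does the same for each stable sub-bundle, using that contraction makes the leaf dynamics a one-dimensional contraction.

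Finally I would assemble the leafwise regularity into global $C^1$ regularity of $h$. Having shown $h$ is uniformly $C^1$ along each of the $k+n$ transverse one-dimensional foliations $\mathcal{F}^{s,f}_1,\dots,\mathcal{F}^{s,f}_k,\mathcal{F}^{u,f}_1,\dots,\mathcal{F}^{u,f}_n$, whose tangent spaces span $T\mathbb{T}^d$ at every point, I would apply a Journé-type regularity lemma: a function that is uniformly $C^1$ along a transverse pair (here, a transverse family) of Hölder foliations with $C^1$ leaves is globally $C^1$. The same argument applied to $h^{-1}$ gives that $h$ is a $C^1$ diffeomorphism, proving $f$ and $A$ are $C^1$-conjugated. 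The main obstacle, I expect, is the first step: showing that the topological conjugacy $h$ actually carries each individual linear sub-foliation to the corresponding $f$-sub-foliation, since a priori $h$ only intertwines the dynamics and need not respect the finer splitting — this is where one must exploit simplicity of the spectrum (to separate the sub-bundles by their expansion/contraction rates along the conjugacy) together with irreducibility over $\mathbb{Q}$ and the non-compactness/minimality of leaves (to rule out the conjugacy collapsing or permuting sub-foliations). A secondary difficulty is verifying that the Livšic machinery applies in the non-invertible setting, which is handled by working on the inverse limit space $\mathbb{T}^{d,f}$ where $\tilde f$ is a homeomorphism and the classical Livšic theorem for hyperbolic homeomorphisms is available.
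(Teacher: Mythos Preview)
Your overall architecture---Livšic to get a coboundary from the periodic data, conformal leafwise metrics to upgrade $h$ to $C^{1+\varepsilon}$ along each one-dimensional sub-foliation, and Journé to globalize---is exactly the scheme the paper uses. The paper also invokes Livšic directly for Anosov endomorphisms (via a closing lemma for endomorphisms) rather than passing to the inverse limit, but that is a minor variation.

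The genuine gap is precisely the step you flag as the ``main obstacle'': getting $h(\mathcal{F}^{u,A}_m)=\mathcal{F}^{u,f}_m$ for each individual $m$. Your proposed mechanism (simple spectrum plus irreducibility plus minimality forces $h$ to respect the fine splitting) does not work on its own. A priori $h$ only respects the flag $\mathcal{F}^{u,A}_{(1,1)}\subset\mathcal{F}^{u,A}_{(1,2)}\subset\cdots$ (this is what the $C^1$-closeness and growth-rate separation give you), not the individual one-dimensional pieces $\mathcal{F}^{u,A}_m$ for $m\geq 2$. The paper resolves this by Gogolev's \emph{induction}: the two steps are interleaved, not sequential. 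One proves that \emph{if} $h$ is already $C^{1+\nu}$ along the leaves of $\mathcal{F}^{u,f}_{(1,m-1)}$ and preserves $\mathcal{F}^{u,f}_i$ for $i\leq m-1$, \emph{then} $h(\mathcal{F}^{u,A}_m)=\mathcal{F}^{u,f}_m$ (a topological argument requiring the prior differentiability as input), and only then can one run the Livšic/conformal-metric argument on the $m$-th foliation to get $C^{1+\varepsilon}$ there and continue the induction. So foliation preservation and leafwise regularity bootstrap each other; you cannot first establish preservation of all sub-foliations and then upgrade regularity as two separate phases.
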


\subsection{Proof of Theorem \ref{teo4}}
In the setting of Theorem \ref{teo4}, we can consider the lifts of $\bar{f}$ and $\bar{A},$ it is possible by analogous arguments in Pesin \cite{pesin2004lectures}, we claim that if $f$ is $C^1-$close to $A,$ then at universal cover level $\bar{f}$ admits a similar splitting
$E^s_{\bar{f}} = E^{s,\bar{f}}_1 \oplus E^{s,\bar{f}}_2 \oplus \ldots \oplus E^{s,\bar{f}}_k $ and  $E^u_{\bar{f}} = E^{u,\bar{f}}_1 \oplus E^{u,\bar{f}}_2 \oplus \ldots \oplus E^{u,\bar{f}}_n .$ As before, define $E^{u,\bar{f}}_{(1,i)} = E^{u,\bar{f}}_1 \oplus  \ldots \oplus E^{u,\bar{f}}_i $ and $E^{s,\bar{f}}_{(1,i)} = E^{s,\bar{f}}_1 \oplus  \ldots \oplus E^{s,\bar{f}}_i ,$ analogously, for $j \geq i,$ we define $E^{s,\bar{f}}_{(i, j)}$  and $E^{u,\bar{f}}_{(i, j)}.$

By \cite{pesin2004lectures} of each sub bundle is H\"{o}lder continuous.  We can take the decomposition $E^s_{\bar{f}} \oplus E^{u,\bar{f}}_{(1,i)} \oplus E^{u,\bar{f}}_{(i+1, n)}$ such that it is a uniform partially hyperbolic splitting. Moreover, by \cite{pesin2004lectures}, each $E^{u,\bar{f}}_{(1,i)} = E^{u,\bar{f}}_1 \oplus  \ldots \oplus E^{u,\bar{f}}_i, $ is uniquely integrable to an invariant foliation $W^{u,\bar{f}}_{(1, i)},$ with $i =1, \ldots, n.$ An analogous construction holds for stable directions. Note that $W^{u,\bar{f}}_{(1,i)}(x) \cap W^{u,\bar{f}}(i, n) := W^{u,\bar{f}}_{i}(x)$ tangent to $E^{u,\bar{f}}_i(x).$  The same for stable directions. Define $f-$invariant directions $E^{u,f}_i(x) = D\pi(y)\cdot E^{u,\overline{f}}(y),$ for any $y \in \mathbb{R}^d$ such that $\pi(y) = x.$ The same for stable directions. By hypothesis, if $x - y \in \mathbb{Z}^d$ then $ W^{u,f}_{i}(\pi(x)) = \pi(W^{u,\bar{f}}_{i}(x)) = \pi(W^{u,\bar{f}}_{i}(y)),$ the same for stable directions.  By assumption of Theorem \ref{teo4} each leaf $W^{u,f}_{i}(x), W^{s,f}_{j}(x)$ are non compact leaves.

In the setting of Theorem \ref{teo4}, we can suppose that the eigenvalues of $A$ satisfying $0< |\beta_1^s| < \ldots < |\beta_k^s| < 1 <  |\beta_1^u| < \ldots < |\beta_n^u|. $ The Lyapunov exponents of $A,$ are
$\lambda^s_i(A) = \log(|\beta_i^s|), i =1, \ldots, k$ and $\lambda^u_i(A) = \log(|\beta_i^u|), i =1, \ldots, n.$ For $f$ we denote by $\lambda^u_{i}(x,f)$ the Lyapunov exponent of $f$ at $x$ in the direction $E^{u,f}_i, i = 1, \ldots, n$ and by  $\lambda^s_{i}(x,f)$ the Lyapunov exponent of $f$ at $x$ in the direction $E^{s,f}_i, i = 1, \ldots, k,$ in the cases that Lyapunov exponents are defined.

Let us introduce a notation $E^{s,A}_{(1, i)} = E^{s,A}_1 \oplus \ldots \oplus E^{s,A}_i, i=1, \ldots, k$ and  $E^{u,A}_{(1, i)} = E^{u,A}_1 \oplus \ldots \oplus E^{u,A}_i, i=1, \ldots, n.$ If $j \geq i,$ we denote $E^{s,A}_{(i, j)} =  E^{s,A}_i \oplus \ldots \oplus E^{s,A}_j $ and $E^{u,A}_{(i, j)} =  E^{u,A}_i \oplus \ldots \oplus E^{u,A}_j. $

Let us start with the unstable directions $E^{u,f}_i.$ Fix $i \in \{1, \ldots, n\} $ and  for each $x \in \mathbb{T}^d,$ consider the tangent leaf $W^{u,f}_i(x),$ projected from $\mathbb{R}^d.$ Up to change $f, A$ by $f^2, A^2,$ consider on tangent leaves an orientations such that $f$ and $A$ acts increasingly on $W^{u,f}_i(x)$ and $W^{u,A}_i(x)$ respectively. Since $f$ is $E^{s,f}_i, E^{u,f}_j-$special, it implies that $f$ is strongly special. In fact, by Proposition 2.5 of \cite{MT} we know that $\mathcal{E}^u_f(x),$ the collection of all unstable directions at $x$ is given by $\mathcal{E}^u_f(x) = \overline{\displaystyle\bigcup_{\pi(y) = x} D\pi(y)\cdot(E^u_{\overline{f}}(y))}.$ In the universal cover $\mathbb{R}^d$ holds $E^{u,\overline{f}}(y) = \displaystyle\bigoplus_{j=1}^n E^{u,\overline{f}}_j(y),$ as $f$ is special with respect to each (continuation) bundle $E^{u,f}_j,$ so for $y, y'$ such that $\pi(y) = \pi(y') = x$ then $E^{u,\overline{f}}(y) = E^{u,\overline{f}}(y').$ Applying  Proposition 2.5 of \cite{MT} we conclude that $f$ is special.

Since $A$ is irreducible over $\mathbb{Q}$ the leaves $W^{u,A}_i$ are non-compact and dense on $\mathbb{T}^d.$

To pass from continuity to differentiability we will make an induction process based on the Gogolev method \cite{Go}. In this work, it is proved the following induction steps:

\begin{enumerate}
\item If $h$ is $C^{1 + \nu}$ on $W^{u,f}_{1, m-1}$ and $h(W^{u,f}_i) = W^{u,L}_i, i = 1, \ldots, m-1,$ then $h(W^{u,f}_m) = W^{u,L}_m.$
\item If $h$ is $h(W^{u,f}_m) = W^{u,L}_m, m = 1, \ldots, n,$ then $h$ is $C^{1+ \alpha}$ restricted on each $W^{u,f}_m.$
\end{enumerate}

The proof of the step $(1)$ is topological and the one of step $(2)$ is based on a construction of a Gibbs measure on each leaf $W^{u,f}_m.$ Since the developing in  \cite{GoGu} can be done in the universal cover and the fact that in our case the invariant foliations are invariant by deck transformations, we can assume the topological argument in step $(1).$ In fact, the topological argument in \cite{GoGu}(section 4.4) and \cite{Go}, can be done here with small modifications by using lifts and coherent the inverse branches if it is necessary. Also, since the conjugacy matches invariant one-dimensional foliations, we can prove the step $(2)$ via conformal metrics, as done in the proof of Theorem \ref{teo1}. The following can be proved.

\begin{lemma} Suppose that $h$ is $h(W^{u,L}_m) = W^{u,f}_m, m = 1, \ldots, n,$ then $h$ is $C^{1+ \varepsilon}$ restricted on each $W^{u,f}_m, m = 1, \ldots, n,$ for some $\varepsilon > 0$ enough small.
\end{lemma}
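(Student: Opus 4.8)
The plan is to mimic the proof of Theorem \ref{teo1} very closely, working one intermediate unstable direction $E^{u,f}_m$ at a time, with the essential new ingredient being that the relevant one-dimensional leaves $W^{u,f}_m$ are now intermediate (not extremal) foliations, so we cannot use S.R.B.\ conditional measures directly but instead use Livsic's Theorem to manufacture a conformal metric along each such leaf. Concretely, by the periodic data hypothesis the function $x \mapsto \log\|Df(x)|E^{u,f}_m(x)\| - \lambda^u_m(A)$ has zero sum around every periodic orbit of $f$; since $f$ is $C^1$-close to $A$ and the continuation bundle $E^{u,f}_m$ is H\"older (by \cite{pesin2004lectures}), this function is H\"older, so Livsic's Theorem for Anosov endomorphisms (quoted above) yields a H\"older $\phi^u_m:\mathbb{T}^d\to\mathbb{R}$ with $\log\|Df(x)|E^{u,f}_m(x)\| - \lambda^u_m = \phi^u_m(f(x)) - \phi^u_m(x)$. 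Exactly as in the proof of the conformal-metric lemma in Theorem \ref{teo1}, integrating $e^{-\phi^u_m}$ along $W^{u,f}_m$-leaves produces a metric $d^u_m$ on each leaf with $d^u_m(f(a),f(b)) = e^{\lambda^u_m} d^u_m(a,b)$.

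Next I would build, on each leaf $W^{u,L}_m(x)$ of the linear model, the analogous candidate conjugacy $\tilde h$ by the same construction as in Theorem \ref{teo1}: pick a leaf equivalence class, a base point $a_0$, mark points $a_j$ at Euclidean distance one, set $b_j = h(a_j)$, and define $\tilde h$ on each $[a_j,a_{j+1}]$ (and on all its $A^n$-iterates) to be the unique map that carries Euclidean length on $W^{u,L}_m$ to $d^u_m$-length on $W^{u,f}_m$, i.e.\ the solution of the ODE $z' = e^{\phi^u_{m,j}(z)}$ with $z(a_j)=b_j$. The intertwining relation $\tilde h\circ A = f\circ \tilde h$ along $W^{u,f}_m$-leaves follows from the conformality identity exactly as before, because $A$ expands $W^{u,L}_m$-leaves (locally, in the lift) by a factor whose average equals $e^{\lambda^u_m}$ — this uses that $W^{u,L}_m$ is one-dimensional and that $f$ is $E^{u,f}_m$-special so the bundle, hence the metric, is well-defined downstairs on $\mathbb{T}^d$. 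Since each leaf $W^{u,L}_m$ is non-compact (hypothesis, guaranteed by irreducibility of $A$ over $\mathbb{Q}$), it is a properly embedded line and the marking by the $a_j$ makes sense globally.

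Then I would run the fundamental-domain/finite-distance argument verbatim: lift $\tilde h$ to $\tilde H:\mathbb{R}^d\to\mathbb{R}^d$, show that a lifted leaf segment $[\bar a_0,\bar a_1]$ crosses finitely many fundamental domains $D_1,\dots,D_k$ and that $\tilde H$ maps the piece in $D_i$ into $H(D_i)$ (using that $\tilde h$ and $h$ agree at the marked endpoints and $\tilde H([\bar a_0,\bar a_1])$ is connected), so $\|H(x)-\tilde H(x)\|\le K$ and hence $\|\tilde H(x) - x\| \le R+K$ for all $x$. Since $\tilde H \circ \bar A = \bar f \circ \tilde H$ and $\bar d(\tilde H, \mathrm{id})$ is finite, Proposition \ref{propAH} forces $\tilde H = H$, so $h=\tilde h$ and therefore $h$ restricted to each $W^{u,f}_m$-leaf is the solution of the ODE $z' = e^{\phi^u_{m}(z)}$, which is $C^{1+\varepsilon}$ for the H\"older exponent $\varepsilon$ of $\phi^u_m$ — with the usual caveat at the marked points $A^n(a_j)$, removed by repeating the construction with the midpoints $c_j$ and noting the two exceptional sets are disjoint. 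The analogous statement for the stable intermediate foliations $W^{s,f}_j$ is obtained by applying the same reasoning to $f^{-1}$ along stable leaves (using decreasing subordinate structure / local inverses), or simply by symmetry. The main obstacle is making sure the conformal metric $d^u_m$ is genuinely well-defined on the torus quotient and that the intertwining $\tilde h \circ A = f \circ \tilde h$ holds on all of $W^{u,f}_m$ rather than just locally; this is exactly where $E^{u,f}_m$-speciality and the one-dimensionality of the leaves are used, together with the topological matching $h(W^{u,L}_{(1,i)}) = W^{u,f}_{(1,i)}$ imported from Gogolev's step (1), which guarantees that $h$ — hence the marked points $b_j = h(a_j)$ — already respects the $m$-th intermediate foliation so that the ODE construction stays inside the correct leaf. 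After this Lemma, an application of Journ\'e's Theorem across the transverse intermediate foliations (inductively, as in \cite{Go,GoGu}) upgrades $h$ to $C^1$, completing the proof of Theorem \ref{teo4}.
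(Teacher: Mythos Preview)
Your proposal is correct and follows exactly the approach the paper intends: the paper does not give a separate proof of this lemma but simply writes ``As in the proof of Theorem \ref{teo1}, using conformal metrics we can prove,'' and your write-up faithfully reproduces that argument (Livsic to obtain $\phi^u_m$, the conformal metric $d^u_m$, the ODE construction of $\tilde h$, the fundamental-domain bound, and the uniqueness via Proposition~\ref{propAH}, with the midpoint trick to handle the marked points). The only cosmetic slip is that $A$ expands $W^{u,L}_m$ by exactly $e^{\lambda^u_m}$, not merely on average, since $A$ is linear with simple spectrum.
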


An analogous lemma is true for intermediate stable (uniform contracting) foliations. We conclude the proof of Theorem \ref{teo4} by using Journ\'{e}'s Lemma.



\end{document}